\tikzset{> =stealth}
\newcommand{\addQEDstyle}[2]{\AtBeginEnvironment{#1}{\pushQED{\qed}\renewcommand{\qedsymbol}{#2}}\AtEndEnvironment{#1}{\popQED}}
\theoremstyle{plain}
\newtheorem{theorem}{Theorem}[section]
\newtheorem{lemma}[theorem]{Lemma}
\newtheorem{proposition}[theorem]{Proposition}
\newtheorem{corollary}[theorem]{Corollary}
\theoremstyle{definition}
\newtheorem{definition}[theorem]{Definition}
\newtheorem{example}[theorem]{Example}
\newtheorem{construction}[theorem]{Construction}
\theoremstyle{remark}
\newtheorem{remark}[theorem]{Remark}
\renewcommand{\epsilon}{\varepsilon}
\renewcommand{\phi}{\varphi}
\newcommand{\N}{\mathds{N}}
\newcommand{\Z}{\mathds{Z}}
\newcommand{\R}{\mathds{R}}
\newcommand{\C}{\mathds{C}}
\newcommand{\da}{{\downarrow}}
\newcommand{\Ring}{\mathbf{Ring}}
\newcommand{\CRing}{\mathbf{CRing}}
\newcommand{\Rig}{\mathbf{Rig}}
\newcommand{\CRig}{\mathbf{CRig}}
\newcommand{\InvRig}{\mathbf{InvRig}}
\newcommand{\InvCRig}{\mathbf{InvCRig}}
\newcommand{\IdemRig}{\mathbf{IdemRig}}
\newcommand{\Ker}{\mathrm{Ker}}
\newcommand{\End}{\mathrm{End}}
\mathchardef\mhyphen="2D
\newcommand{\G}{\mathcal{G}}
\newcommand{\I}{\mathcal{I}}
\newcommand{\z}{\mathfrak{z}}
\newcommand{\h}{\mathfrak{h}}
\title{The Case for Inverse Semirings}
\author[P. F. Faul]{Peter F. Faul}
\address{Stellenbosch University, Stellenbosch, South Africa}
\email{peter@faul.io}
\author[A. Goswami]{Amartya Goswami}
\address{University of Johannesburg, Auckland Park Kingsway Campus, Auckland Park 2006, South Africa}
\email{agoswami@uj.ac.za}
\author[G. Joubert]{Gideo Joubert}
\address{Stellenbosch University, Stellenbosch, South Africa}
\email{22930701@sun.ac.za}
\author[G. Manuell]{Graham Manuell}
\address{Stellenbosch University, Stellenbosch, South Africa}
\email{graham@manuell.me}
\date{18 November 2024}
\subjclass[2020]{16Y60, 20M18, 06F25}
\begin{document}

\maketitle
\thispagestyle{empty}

\begin{abstract}
A semiring generalises the notion of a ring, replacing the additive abelian group structure with that of a commutative monoid. In this paper, we study a notion positioned between a ring and a semiring --- a semiring whose additive monoid is a commutative inverse semigroup. These inverse semirings include some important classes of semiring, as well as some new motivating examples. We devote particular attention to the inverse semiring of bounded polynomials and argue for their computational significance. We then prove a number of fundamental results about inverse semirings, their modules and their ideals. Parts of the theory show strong similarities with rings, while other parts are akin to the theory of idempotent semirings or distributive lattices. We note in particular that downward-closed submodules are precisely kernels. We end by exploring a connection to the theory of E-unitary inverse semigroups.
\end{abstract}

\section{Introduction}

Rings and distributive lattices share a number of properties and both fall under the umbrella of \emph{semirings} --- a very general algebraic structure in which the additive group structure of a ring has been replaced by a commutative monoid. However, both of these examples already occur in a more restrictive class of semiring, which has a weak notion of `additive inverse' in the following sense. 

\begin{definition}
An \emph{inverse monoid} is a monoid $(M,+,0)$ for which each $x \in M$ has a unique `inverse' element $-x$ such that $x + (-x) + x = x$ and $(-x) + x + (-x) = -x$.
\end{definition}

\begin{definition}
An \emph{inverse semiring} is a semiring $(R,+,0, \cdot, 1)$ in which the additive monoid $(R,+,0)$ is an inverse monoid.
\end{definition}

Inverse semirings have appeared before in the literature under various names. They were introduced in \cite{Kar74}, where they were called additively inversive semirings. They have also been called inverse semiring \cite{AD21}, additively inverse semiring \cite{dadhwal2024study}, additive inverse semiring \cite{SM04}, and additively regular semiring \cite{Kar74}. Pure ideals and pure $k$-ideals of inverse semirings have been studied in \cite{palakawong2023characterizations}, semiprime inverse semirings in  \cite{sara2016centralizer,dog2021centralizers} and the special class of so-called Clifford semirings in \cite{sen2005clifford, bhuniya2014clifford}. Derivations of various types in inverse semirings are studied in \cite{yilmaz2023note, dadhwal2024study, yaqoub2021generalized}. Despite this rather extensive list of papers, the basic theory and motivation for inverse semirings does not seem to have been written down before.

Every ring and every idempotent semiring is an example of an inverse semiring. In the former case, the inverse is given by the usual negation, while in the latter it is the identity. An example that does not lie in either of these classes is given by the endomorphism semiring of a commutative inverse monoid.
Another important example is the inverse semiring of `bounded polynomials', which we claim captures the computational nature of polynomials more accurately than the usual ring of polynomials (see \cref{ex:polynomials}).

The theory of inverse semirings has much in common with that of rings and idempotent semirings. In particular, they come equipped with a natural order structure, and ideals which are downward-closed with respect to this order behave very much like ideals of rings. More precisely, these ideals are the \emph{subtractive} ideals and appear as the kernels of homomorphisms of inverse semirings. Similar results are true of modules over an inverse semiring. On the other hand, an ideal is upward-closed exactly when it contains the set of additive idempotents.

Rings and idempotent semirings are not the only possible sources of inspiration. In fact, many concepts and results concerning inverse semigroups naturally extend to inverse semirings. For instance, the set of additive idempotents of an inverse semiring will be a subtractive ideal precisely when the inverse semiring is E-unitary. Moreover, every E-unitary inverse semiring embeds into the product of a ring and an idempotent semiring (see \cref{thm:e_unitary_embedding}).

In the study of inverse semirings, we occasionally encounter some near misses --- that is, algebraic structures that are almost inverse semirings. In particular, we encounter a number of structures that are inverse semirings except that $0 \cdot x \ne 0$ in general. The fact that these examples have an additive identity is perhaps accidental, and it may be best to think of their additive structure as \emph{inverse semigroups} rather than inverse monoids. The Riemann sphere from complex analysis constitutes one such example (see \cref{ex:reimann_sphere}).

\section{Background}

\subsection{Inverse monoids}

There are a number of basic results concerning (commutative) inverse monoids that we intend to take for granted in this paper. For a proper introduction to this topic see \cite{lawson1998inverse}. 

\begin{proposition}
In an inverse monoid, $x-x$ is an idempotent element and we write $0_x = x-x$ for the idempotent associated to $x$ in this way.
\end{proposition}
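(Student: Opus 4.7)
The plan is to verify the idempotency relation $(x-x) + (x-x) = x-x$ directly, using associativity of $+$ together with the first defining identity $x + (-x) + x = x$. Concretely, I would regroup the expression as
\[
(x + (-x)) + (x + (-x)) = \bigl(x + (-x) + x\bigr) + (-x) = x + (-x),
\]
which is the desired idempotency. Note that this argument does not require commutativity of $+$, nor does it invoke the second defining identity $(-x) + x + (-x) = -x$; it is purely a consequence of the first identity.

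The notation $0_x$ is simply a convenient name for the idempotent $x - x$, so nothing further needs to be verified once idempotency is established. I anticipate no real obstacle here: the only subtle point is remembering that the expression $x + (-x) + x$ is unambiguous by associativity, which licenses the rebracketing step above.
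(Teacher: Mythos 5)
Your proof is correct: the regrouping $(x+(-x))+(x+(-x)) = \bigl(x+(-x)+x\bigr)+(-x) = x+(-x)$ is exactly the standard verification, and your observations that neither commutativity nor the second defining identity is needed are accurate. The paper states this proposition without proof, listing it among the background facts on inverse monoids that it takes for granted, so there is no alternative argument in the paper to compare against.
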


\begin{proposition}\label{prp:sum_of_idempotents}
In an inverse monoid, $0_{x+y} = 0_x + 0_y$.
\end{proposition}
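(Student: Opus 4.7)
The plan is to compute $0_{x+y}$ directly from the definition and rearrange using commutativity, after first establishing that the inverse operation distributes over addition in a commutative inverse monoid.

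First I would prove the auxiliary fact that $-(x+y) = (-x) + (-y)$. By uniqueness of inverses in an inverse monoid, it suffices to verify the two defining relations for $(-x)+(-y)$ as a candidate inverse of $x+y$. Using commutativity of the additive monoid, we have
\[
(x+y) + \bigl((-x)+(-y)\bigr) + (x+y) = \bigl(x + (-x) + x\bigr) + \bigl(y + (-y) + y\bigr) = x + y,
\]
and symmetrically
\[
\bigl((-x)+(-y)\bigr) + (x+y) + \bigl((-x)+(-y)\bigr) = (-x) + (-y).
\]
Uniqueness of the inverse then forces $-(x+y) = (-x) + (-y)$.

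With this in hand, the main identity follows by a short calculation: by definition and commutativity,
\[
0_{x+y} = (x+y) - (x+y) = (x+y) + \bigl((-x) + (-y)\bigr) = \bigl(x + (-x)\bigr) + \bigl(y + (-y)\bigr) = 0_x + 0_y,
\]
which is the desired equation.

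The only step that requires any care is the lemma $-(x+y) = (-x)+(-y)$, and even this is routine given commutativity; without commutativity the analogous statement would be $-(x+y) = (-y) + (-x)$, so the commutative hypothesis (implicit since this is the additive monoid of a semiring situated in the paper's commutative-inverse-monoid setting) is doing real work. Everything else is bookkeeping, so I do not anticipate any genuine obstacle.
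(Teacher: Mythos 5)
Your proof is correct. The paper states this proposition without proof, as one of the background facts about (commutative) inverse monoids it takes for granted with a reference to Lawson, so there is nothing to compare against; your argument --- establishing $-(x+y) = (-x)+(-y)$ by uniqueness of inverses and then regrouping --- is the standard one, and your remark that commutativity is doing real work is apt, since the identity fails in noncommutative inverse monoids (e.g.\ symmetric inverse monoids) and it is the blanket commutativity assumption of the paper's background section (the additive monoid of a semiring is commutative) that licenses the statement as written.
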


\begin{proposition}\label{prop:commuting_idemponents}
Let $M$ be a monoid such that for every $x \in M$ there is a $y \in M$ with $xyx = x$. Then $M$ is an inverse monoid if and only if the idempotents in $M$ commute. Consequently, if we consider taking inverses to be a unary operation, we see that inverse monoids form a variety.
\end{proposition}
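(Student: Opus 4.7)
My plan is to handle the two directions of the equivalence separately and then recover the variety claim by noting that all conditions involved are equational.

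For the $(\Leftarrow)$ direction, assume every element has some $y$ with $xyx = x$ and that idempotents commute. Given such a $y$, the element $y' := yxy$ is a proper inverse of $x$, satisfying $xy'x = x$ and $y'xy' = y'$, by routine manipulation using that $xy$ and $yx$ are idempotent. If $y_1, y_2$ are both proper inverses of $x$, the four elements $xy_1, xy_2, y_1x, y_2x$ are idempotent and so commute pairwise. Uniqueness follows from the chain $y_1 = y_1xy_1 = y_1(xy_2x)y_1 = (y_1x)(y_2x)y_1 = (y_2x)(y_1x)y_1 = y_2(xy_1x)y_1 = y_2xy_1$; a symmetric manoeuvre---insert $xy_2x = x$ on the other side to rewrite $y_2xy_1 = y_2(xy_2)(xy_1)$, swap the commuting idempotents $xy_1, xy_2$ to reach $y_2(xy_1x)y_2$, and collapse to $y_2xy_2 = y_2$---delivers $y_1 = y_2$.

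For the $(\Rightarrow)$ direction, assume uniqueness of the inverse. Any idempotent $e$ has $eee = e$, so $e$ is an inverse of itself and uniqueness forces $-e = e$. Given idempotents $e, f$, set $g = (ef)^{-1}$. The technical heart is to verify that $fge$ is simultaneously idempotent and a two-sided inverse of $ef$: using $gefg = g$, $(ef)g(ef) = ef$, $e^2 = e$ and $f^2 = f$, direct computations yield $(fge)^2 = fge$, $(ef)(fge)(ef) = ef$, and $(fge)(ef)(fge) = fge$. Uniqueness then forces $g = fge$, whence $g$ is idempotent and hence self-inverse; since $g^{-1} = ef$ by involutivity, $ef = g$ is itself idempotent. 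By symmetry $fe$ is idempotent, and then $(ef)(fe)(ef) = efef = ef$ and $(fe)(ef)(fe) = fefe = fe$ exhibit $fe$ as an inverse of $ef$; uniqueness concludes $ef = fe$.

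The variety statement then follows by rewriting the characterisation equationally over the signature $(+, 0, -)$: one takes the monoid axioms together with $-(-x) = x$, $x + (-x) + x = x$, and $(x + (-x)) + (y + (-y)) = (y + (-y)) + (x + (-x))$. All of these are equations, and by a standard argument from inverse semigroup theory the models of this theory coincide with inverse monoids. The main obstacle I expect is the computation showing $fge$ is a two-sided inverse of $ef$: each of the three identities needs $gefg = g$ or $(ef)g(ef) = ef$ inserted at just the right place and $e^2, f^2$ collapsed at just the right moment, so the bracketing must be handled with some care.
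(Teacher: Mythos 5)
The paper does not actually prove this proposition: it appears in the Background section as one of the results ``we intend to take for granted'', with a pointer to \cite{lawson1998inverse}, so there is no in-paper argument to compare against. Your proof of the equivalence itself is the classical textbook argument and I have checked that it goes through: $y' = yxy$ is indeed a two-sided inverse whenever $xyx = x$; the elements $xy_1, xy_2, y_1x, y_2x$ are idempotent and the two commutation chains correctly yield $y_1 = y_2xy_1 = y_2$; and in the converse direction the computations showing $fge$ is an idempotent inverse of $ef$, hence $g = fge$ is idempotent, hence $ef = g^{-1} = g$ is idempotent and finally $ef = fe$, are all valid. This is exactly the argument one finds in Lawson or Howie, so you have in effect supplied the proof the paper outsources.

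The one place where you are gliding over a real (if well-known) issue is the variety claim. Your proposed equational theory includes commutativity only of the idempotents of the form $x + (-x)$, whereas the equivalence you just proved requires \emph{all} idempotents to commute. To conclude that every model of your equations is an inverse monoid, you must additionally show that in such a model every idempotent $e$ actually arises as $x + (-x)$ for some $x$ (equivalently, that $e + (-e) = e$ for idempotent $e$), or otherwise derive uniqueness of inverses directly; this step is true but not immediate from your axioms, and some standard presentations add the axiom $-(x+y) = (-y) + (-x)$ precisely to smooth it over. Since you explicitly defer to ``a standard argument from inverse semigroup theory'' this is a gap in exposition rather than a flaw in strategy, but it is the step a referee would ask you to spell out. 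Note also that for the paper's actual use of this proposition (\cref{rem:commuting_idemponents}, where the monoid is commutative) the idempotents commute for free, so none of this machinery is needed there.
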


\begin{remark}\label{rem:commuting_idemponents}
The above proposition implies that we do not need to prove the uniqueness of inverses if the monoid is commutative.
\end{remark}

\begin{proposition}
Let $M$ be an inverse monoid. The set $E(M)$ of idempotents of $M$ forms a semilattice under addition.
\end{proposition}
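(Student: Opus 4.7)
The plan is to check that addition, restricted to $E(M)$, defines a commutative idempotent operation; associativity and a unit (namely $0$, which lies in $E(M)$ since $0 + 0 = 0$) are inherited from the ambient inverse monoid, so the remaining work is closure, commutativity, and idempotence.

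The key observation I would make first is that for any idempotent $e$, one has $-e = e$. Indeed, setting $y = e$ in the defining equations $e + y + e = e$ and $y + e + y = y$ reduces both to $e + e = e$, which holds by idempotence, so uniqueness of inverses in an inverse monoid forces $-e = e$. Consequently $0_e = e + (-e) = e + e = e$.

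With this in hand, closure under addition falls out of \cref{prp:sum_of_idempotents}: for $e, f \in E(M)$,
\[
0_{e+f} \;=\; 0_e + 0_f \;=\; e + f,
\]
and since $0_x$ is always idempotent for every $x$, we conclude $e + f \in E(M)$. Commutativity of $+$ on $E(M)$ is precisely \cref{prop:commuting_idemponents}, and idempotence of the operation on $E(M)$ is built in by the definition of $E(M)$.

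I do not anticipate a real obstacle: the only slightly subtle step is recognising that $0_e = e$ whenever $e$ is idempotent, and this is a two-line argument via uniqueness of inverses. Once recorded, the semilattice claim is a direct corollary of the two preceding propositions.
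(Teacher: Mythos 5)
Your proof is correct. Note that the paper does not actually prove this proposition --- it is listed among the background facts on inverse monoids to be taken for granted, with a pointer to Lawson's book --- so there is no argument to compare against; your write-up supplies the standard one. The key step, that $e$ is its own inverse (hence $0_e = e$) by uniqueness of inverses, is right, and closure then follows from \cref{prp:sum_of_idempotents} as you say (or even more directly from $(e+f)+(e+f) = e+e+f+f = e+f$, using that idempotents commute by \cref{prop:commuting_idemponents}).
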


\begin{proposition}
Let $M$ be a commutative inverse monoid and suppose $z \in E(M)$. The set $\G(z) = \{s \mid 0_s = z\}$ is an abelian group with identity $z$.
\end{proposition}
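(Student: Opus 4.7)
The plan is to verify in turn that $\G(z)$ is closed under addition, that $z$ acts as an identity on $\G(z)$, that every element has an inverse in $\G(z)$, and then observe that associativity and commutativity are inherited from $M$.

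For closure, if $s,t \in \G(z)$ then $0_s = z = 0_t$, so by \cref{prp:sum_of_idempotents} we get $0_{s+t} = 0_s + 0_t = z + z = z$, where the last equality uses that $z$ is idempotent. Hence $s + t \in \G(z)$.

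For the identity law, fix $s \in \G(z)$. Since $z = 0_s = s + (-s)$, using commutativity and the defining identity $s + (-s) + s = s$ of the inverse monoid, I would compute $s + z = s + s + (-s) = s + (-s) + s = s$. For inverses, I would show that $-s \in \G(z)$ by noting $0_{-s} = (-s) + s = s + (-s) = 0_s = z$, and then $s + (-s) = z$ gives the desired group inverse. Associativity and commutativity are automatic from $M$.

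None of the steps looks difficult; the only mild subtlety is the identity calculation, where one has to be careful to apply commutativity correctly in order to convert $s + s + (-s)$ into the form $s + (-s) + s$ to which the inverse axiom applies. Everything else is a direct application of \cref{prp:sum_of_idempotents} and the definition of $0_s$.
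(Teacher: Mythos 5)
Your proof is correct and is the standard argument; the paper lists this proposition among the background facts on inverse monoids that it takes for granted, so there is no proof in the paper to compare against. The only step you leave implicit is that $z$ itself lies in $\G(z)$ (which is needed for $z$ to literally be the identity \emph{element} of the group): since an idempotent is its own inverse, $0_z = z + z = z$, so this is immediate, as is the fact $-(-s)=s$ that you use silently in the computation of $0_{-s}$.
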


It is possible to define an order of the elements of an inverse monoid. We consider the opposite of the traditional order in this paper since we consider the monoid operation to be additive instead of multiplicative, i.e., we view $E(M)$ as a $\vee$-semilattice.

\begin{definition}\label{InvSR preorder}
Let $M$ be an inverse monoid. Then we define the canonical preorder on $M$ by $x \le y$ if there exists an idempotent $z$ such that $x + z = y$.
\end{definition}

\begin{proposition}\label{prp:preorderadd}
In an inverse monoid, $x \le y$ if and only if $x + 0_y = y$.
\end{proposition}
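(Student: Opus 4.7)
The backward direction is immediate from \cref{InvSR preorder}: since $0_y = y + (-y)$ is an idempotent (by the first proposition of this section), the equation $x + 0_y = y$ exhibits $0_y$ as a witness for $x \le y$.

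For the forward direction, suppose $x \le y$, so that $x + z = y$ for some idempotent $z \in E(M)$. The plan is to rewrite $0_y$ using \cref{prp:sum_of_idempotents} and then telescope. First I would establish the small auxiliary fact that $0_z = z$ for any idempotent $z$: the equality $z + z = z$ together with $z + z + z = z$ shows that $z$ is an inverse of itself, so by uniqueness $-z = z$ and hence $0_z = z + (-z) = z$. Second, I would use the fact $x + 0_x = x$, which follows at once from the inverse axiom $x + (-x) + x = x$ and commutativity of the additive monoid.

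With these in hand, \cref{prp:sum_of_idempotents} gives
\[
0_y \;=\; 0_{x+z} \;=\; 0_x + 0_z \;=\; 0_x + z,
\]
and therefore
\[
x + 0_y \;=\; x + 0_x + z \;=\; x + z \;=\; y,
\]
as required.

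There is no real obstacle here --- the statement is essentially a bookkeeping identity --- but the one place where care is needed is to recognise that the two supporting facts ($0_z = z$ for $z$ idempotent, and $x + 0_x = x$) genuinely need the inverse and commutativity axioms; once they are isolated, the rest is the two-line calculation above.
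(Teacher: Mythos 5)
Your proof is correct. Note that the paper does not actually prove this proposition --- it appears in the background section among the facts about (commutative) inverse monoids that are ``taken for granted'' --- so there is no authorial proof to compare against; your argument via $0_y = 0_{x+z} = 0_x + z$ and the auxiliary identities $0_z = z$ (for idempotent $z$) and $x + 0_x = x$ is the standard one, and you are right to flag that the latter identity is where commutativity of the additive monoid is genuinely used.
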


There are two main results concerning $0$ and the order. Firstly, nothing compares less than zero and secondly, only idempotents compare greater than $0$.

\begin{proposition}
In an inverse monoid, $x \le 0$ if and only if $x = 0$.
\end{proposition}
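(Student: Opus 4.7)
The plan is to reduce the statement to the identity $0_0 = 0$ by invoking \cref{prp:preorderadd}. Specifically, that proposition tells us that $x \le 0$ is equivalent to $x + 0_0 = 0$, so everything hinges on identifying what $0_0$ is.

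First, I would observe that $0$ serves as its own additive inverse in any monoid: since $0 + 0 + 0 = 0$ (using that $0$ is the identity), the element $0$ satisfies the defining equations $0 + y + 0 = 0$ and $y + 0 + y = y$ with $y = 0$. By the uniqueness of inverses in an inverse monoid, $-0 = 0$, and hence $0_0 = 0 + (-0) = 0$. Plugging this into the consequence of \cref{prp:preorderadd} yields $x = 0$, which handles the forward direction.

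For the converse, $0$ is idempotent and $0 + 0 = 0$ witnesses $0 \le 0$ directly from \cref{InvSR preorder}.

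I do not expect any real obstacle here; the only subtlety worth naming is the appeal to uniqueness of inverses to conclude $-0 = 0$, after which the argument is essentially a rewriting step. If one preferred to avoid \cref{prp:preorderadd}, one could argue directly: from $x + z = 0$ with $z$ idempotent, add $-x$ to obtain $0_x + z = -x$, whence $-x$ is idempotent; then $x = -(-x)$ is also idempotent and equals $-x$, so $x = x + (-x) + x = 0 + x = x$ combined with $x + z = 0$ forces $x = 0$. But this is strictly longer, so the \cref{prp:preorderadd} route is preferable.
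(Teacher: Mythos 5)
Your argument is correct: the paper states this proposition without proof, as a background fact about inverse monoids to be taken for granted, and your reduction via \cref{prp:preorderadd} to the identity $0_0 = 0$ (using $-0 = 0$ by uniqueness of inverses) is exactly the standard justification. One small caveat: the final chain of equalities in your optional direct alternative is garbled, since $x + (-x) + x = 0 + x$ would presuppose $0_x = 0$, which is what one is trying to show --- but as you rely on the \cref{prp:preorderadd} route, this does not affect the validity of the proof.
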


\begin{proposition}\label{prp:upclosed}
In an inverse monoid, $0 \le x$ if and only if $x$ is an idempotent.
\end{proposition}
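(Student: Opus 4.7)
The plan is to prove both implications directly from the definition of the canonical preorder, with minimal appeal to earlier machinery.

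For the forward direction, suppose $0 \le x$. By \cref{InvSR preorder}, there exists an idempotent $z$ with $0 + z = x$. Since $0$ is the monoid identity, this forces $x = z$, so $x$ is itself an idempotent. This is essentially a one-line argument.

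For the reverse direction, suppose $x$ is idempotent. Then $0 + x = x$, and by hypothesis $x$ is an idempotent, so the witness $z = x$ shows $0 \le x$ by \cref{InvSR preorder}. This is again immediate.

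Alternatively, one could route the argument through \cref{prp:preorderadd}, which says $0 \le x$ iff $0 + 0_x = x$, i.e.\ iff $0_x = x$. Since $0_x$ is always idempotent, $0_x = x$ forces $x$ to be idempotent; conversely, if $x$ is idempotent then $x$ is its own inverse (because $x + x + x = x$ and the defining identities are satisfied with $-x = x$), whence $0_x = x - x = x + x = x$. Either route works and there is no real obstacle; the statement is essentially a restatement of the definition, and the only subtlety is noting that idempotents in an inverse monoid are self-inverse.
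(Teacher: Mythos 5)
Your proof is correct, and since the paper states this proposition without proof in its background section, your direct argument from \cref{InvSR preorder} is exactly the expected one: the forward direction is immediate because $0+z=z$, and the reverse direction takes $z=x$ itself as the witness. The alternative route via \cref{prp:preorderadd} is also sound, including the observation that idempotents are self-inverse.
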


\subsection{Semirings}

We also make use of some basic concepts from the theory of semirings. See \cite{Gol99} for more details.

\begin{definition}
A \emph{semiring} $(R,+,0, \times ,1)$ has an `additive' commutative monoid $(R,+,0)$ and a `multiplicative' monoid $(R,\times,1)$ such that multiplication distributes over addition. We write $\Rig$ for the category of semirings, where the morphisms are the expected notion of semiring homomorphisms.

We say a semiring is \emph{commutative} if its multiplicative monoid is commutative and write $\CRig$ for the corresponding category.
\end{definition}

The category $\Ring$ of rings and $\CRing$ of commutative rings are subcategories of $\Rig$ and $\CRig$, respectively. Another important class of semirings is that of idempotent semirings.

\begin{definition}
An \emph{idempotent semiring} $(R,+,\times, 0,1)$ is a semiring in which $+$ is idempotent --- that is, $x + x = x$ for all $x$. We denote the category of idempotent semirings by $\IdemRig$.
\end{definition}

Finally, we write $\InvRig$ for the category of inverse semirings and $\InvCRig$ for the subcategory of commutative structures.

In this paper we will be interested in modules and algebras over inverse semirings.

\begin{definition}
Given a semiring $R$, a commutative monoid $(M,+,0)$ becomes a (left) $R$-module by equipping it with an action $R\times M\to M$, satisfying that for all $a,b\in R$ and $x,y\in M$ we have
\begin{enumerate}[(i)]
\item $a(x+y)=ax + ay$,
\item $a0=0$,
\item $(a+b)x=ax + bx$,
\item $0x=0$,
\item $(ab)x=a(bx)$,
\item $1x=x$.
\end{enumerate}
\end{definition}

\begin{definition}
If $R$ is a commutative semiring, then an \emph{$R$-algebra} $A$ is an $R$-module equipped with a compatible semiring structure. Explicitly, it is a module $A$ with an unital associative bilinear multiplication such that $r\cdot (a b) = (r\cdot a) b = a (r \cdot b)$, where $r$ is a scalar in $R$, $\cdot$ denotes scalar multiplication, and the algebra multiplication is denoted by juxtaposition.
\end{definition}
Note that a commutative $R$-algebra $A$ is the same thing as an inverse semiring $A$ equipped with a semiring homomorphism from $R$ to $A$.

\section{Examples}

Just as an inverse semigroup simultaneously generalises groups and semilattices, inverse semirings simultaneously generalise rings and idempotent semirings, the latter of which include distributive lattices and the other following examples.

\begin{example}\label{ex:quantale}
A \emph{quantale} $(Q,\bigvee,0,\cdot,1)$ is a complete join-semilattice equipped with a distributive multiplication. Since join is idempotent, quantales are idempotent semirings.
\end{example}

\begin{example}
An important example is given by the \emph{tropical semiring} $(\mathds{R}_\infty,\min,\infty,+,0)$. Here $\mathds{R}_\infty = \R \sqcup \{\infty\}$. Do note that, somewhat confusingly, $+$ is the \emph{multiplication} and $\min$ is the idempotent addition.
\end{example}

There are a number of interesting examples that are neither rings nor idempotent semirings.

\begin{example}\label{endInvSR}
Let $X$ be a commutative inverse monoid. Then the set of endomorphisms $\End(X)$ has the structure of an inverse semiring. Addition is computed pointwise and multiplication is given by function composition.
\end{example}

If the inverse monoid $X$ is neither a group nor a semilattice, this inverse semiring will be neither a ring nor an idempotent semiring. All inverse semirings $R$ embed into $\End(R)$ by a version of Cayley's theorem for semirings.

Another fundamental example stems from computational considerations about polynomials.
A polynomial is typically represented on a computer by a finite list of coefficients. The length of this list gives an upper bound on the degree of the polynomial, but since the coefficients of the highest powers of $x$ might be zero, this bound is not necessarily tight.

Note that in the case of real or complex coefficients, it is \emph{not} possible to ensure that this bound is tight, since checking whether a real number is zero is undecidable. (Alternatively, checking if a floating-point number is zero is undesirable for reasons of numerical stability.)
For example, consider two polynomials $p(x) = x^2 + x$ and $q(x) = -x^2$, both with degree bound $2$. Term-by-term addition finds $p(x) + q(x)$ to be $0x^2 + x$. This polynomial has degree 1, but the computer cannot determine this and so cannot reduce the size of the array.
Thus, the usual ring of polynomials is not actually a faithful model of this situation. A better model is given by the following inverse semiring.

\begin{example} \label{ex:polynomials}
Let $\R[x]_B$ be the set of pairs $(p,n)$, where $p \in \R[x]$ and $n \in \N \sqcup \{-\infty\}$ such that $\mathrm{deg}(p) \le n$ (where $0$ is defined to have degree = $-\infty$).
This has an inverse semiring structure given by
\begin{enumerate}[(i)]
\item $(p,n) + (q,m) = (p+q, \max(n,m))$,

\item $0 = (0,-\infty)$,

\item $(p,n)(q,m) = (pq,n+m)$,

\item $1 = (1,0)$,

\item $-(p,n) = (-p,n)$.
\end{enumerate}
Note that the additive identity is not $(0,0)$ because then multiplication by $0$ would not behave appropriately, but see \cref{rem:zeroless_polys}.
\end{example}

A large class of examples can be obtained from rings and bimodules by the following construction.
\begin{construction}\label{constr:from_bimodule}
Let $S$ be a ring, $A$ an $S$-$S$-bimodule, and $f\colon A \to S$ a bimodule homomorphism (where $S$ is equipped with its canonical bimodule structure). Suppose that $f(x) \cdot y = x \cdot f(y)$ for all $x,y \in A$. Then there is an inverse semiring $\I(f)$ with underlying set $S \sqcup A$ and operations defined as follows:
\begin{enumerate}[(i)]
\item binary sums/products of elements in the $S$ component are computed as in $S$,
\item the multiplicative unit is given by $1_S \in S$,
\item binary sums of elements in the $A$ component are computed as in $A$,
\item the additive unit is given by $0_A \in A$,
\item $a a' = f(a) \cdot a' = a \cdot f(a')$ for $a,a' \in A$,
\item $ s + a = a + s = f(a) + s$ for $a \in A$ and $s \in S$,
\item $a s = a \cdot s$,
\item $s a = s \cdot a$.
\end{enumerate}
Note in particular that these conditions hold if $A$ is a two-sided ideal of $S$ and $f$ is the inclusion.
\end{construction}

This is indeed an inverse semiring. Moreover, the only idempotents are $0 = 0_A$ and $0_1 = 0_S$.
\begin{proof}
It is clear that addition is commutative and that $0_A$ is an additive unit. Associativity is also straightforward by checking each case in turn and using the linearity of $f$.

It is easy to see that $1_S$ is a multiplicative unit. For associativity of multiplication, $(xy)z = x(yz)$, there are 8 cases. If $x,y,z \in S$, this follows from the associativity of multiplication in $S$. If $x,y,z \in A$, this follows from the linearity of $f$ and the compatibility of scalar multiplication with multiplication in $S$. If two of the three elements are in $S$, it follows from the compatibility of scalar multiplication with multiplication in $S$ and of left and right scalar multiplications with each other. The latter compatibility also gives associativity for the remaining three cases.
    
Distributivity ($x(y+z) = xy + xz$ and its dual) also has a number of cases, most of which follow from various distributivity axioms in the data. If $y \in A$ and $z \in S$, we also use the linearity of $f$.
The absorption law $0x = 0 = x0$ follows from absorption for $S$ and the fact that $f$ preserves $0$.
    
Finally, additive inverses are quickly seen to be given by the negatives in $S$ and $A$.
\end{proof}
Later in \cref{cor:two_idempotents_from_bimodules} we will see that this construction exhausts all inverse semirings with two (additive) idempotents. In fact, this is a special of a much more general construction that will be discussed in \cite{faul2024}.

We can also obtain a surprisingly useful class of inverse semirings by adjoining zeros to rings.
\begin{definition}\label{def:adjoin_zero}
The functor $(-)_0 \colon \InvRig \to \InvRig$ sends an inverse semiring $R$ to the inverse semiring $R_0 = R \sqcup 0$, where the addition, multiplication and inverses are defined on $R$ as before, while the new $0$ is an identity for addition and an absorbing element with respect to multiplication.
As for morphisms, $f\colon R \to S$ is sent to a map $f_0\colon R_0 \to S_0$ which acts on the old elements like $f$ does and sends the new $0$ to $0$.
We will denote the zero of $R$ in $R_0$ by $\widehat{0}$ to distinguish it from the newly adjoined zero.
\end{definition}
Note that, at least when $R$ is a ring, $R_0$ can be obtained from a special case of \cref{constr:from_bimodule} with $A = 0$.

\begin{example}
In particular, applying this construction to the rings $\Z$ and $\R$ yields the inverse semirings $\Z_0$ and $\R_0$, which we will see more of later in the paper.
\end{example}

\subsection{Near misses}

There are a number of algebraic structures that are almost, but not quite, inverse semirings. In particular, these examples are \emph{non-absorptive} in the sense that $0 \cdot x$ need not equal $0$. In fact, in all the natural examples we are aware of, $0 \cdot x = 0_x$.

It is perhaps best to think of this as a variant of inverse semirings in which the additive structure might lack a zero --- that is, it forms only an inverse semigroup under addition, instead of an inverse monoid. We call these \emph{zeroless inverse semirings}. While a zero element does exist in all the examples we consider, it will be useful to not require it to be preserved by homomorphisms.

The functor $(-)_0$ from \cref{def:adjoin_zero} can be understood as arising from the adjunction between zeroless inverse semirings and inverse semirings. The forgetful functor from inverse semirings to zeroless inverse semirings has a left adjoint which freely adds a zero.

Having discussed adjoining $0$, we now consider a notion of adjoining $\infty$. 

\begin{definition}
Given a ring $R$, let $R_\infty$ be $R$ with an additional $\infty$ element added, where the addition, multiplication and inverses are defined on elements of $R$ as before, and where $x + \infty = \infty$ and $x \cdot \infty = \infty = \infty \cdot x$ for all $x \in R_\infty$.
\end{definition}

\begin{proposition}
If $R$ is a ring, $R_\infty$ is a zeroless inverse semiring.
\end{proposition}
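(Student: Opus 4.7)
The plan is to verify the zeroless inverse semiring axioms one at a time, leaning on the fact that $\infty$ absorbs every expression it appears in (under both addition and multiplication), so that essentially every identity involving $\infty$ collapses to $\infty = \infty$. Concretely, I need to confirm that $(R_\infty, +)$ is a commutative inverse semigroup, that $(R_\infty, \cdot, 1)$ is a monoid, and that multiplication distributes over addition on both sides.

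For the additive structure, commutativity and associativity hold on $R$ by assumption and extend to $R_\infty$ because any sum in which $\infty$ appears equals $\infty$ regardless of the bracketing or ordering of the other summands. For inverses, each $x \in R$ retains its additive inverse $-x \in R$ from the ring structure, and $\infty$ serves as its own inverse, since $\infty + \infty + \infty = \infty$. The element $0 \in R$ still acts as an additive identity in $R_\infty$, so by \cref{prop:commuting_idemponents} together with \cref{rem:commuting_idemponents} the commutativity of $+$ upgrades weak inverses to genuine ones; forgetting the identity then gives the commutative inverse semigroup required by the zeroless signature.

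For the multiplicative structure, $(R, \cdot, 1)$ is already a monoid, the equalities $1 \cdot \infty = \infty = \infty \cdot 1$ hold by definition, and associativity extends to $R_\infty$ since any product in which $\infty$ appears reduces to $\infty$ on either side. Distributivity $x(y + z) = xy + xz$ is inherited from $R$ when $x, y, z \in R$; if any of $x, y, z$ equals $\infty$ then both sides reduce to $\infty$ by the absorbing behaviour (using $y + z = \infty$ whenever $y$ or $z$ equals $\infty$). The right distributive law follows symmetrically.

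The only real obstacle is keeping the case analysis organised; there is no deep content, as each case is forced by the absorbing behaviour of $\infty$. It is worth remarking, for contrast with \cref{def:adjoin_zero}, that although $0 \in R$ remains an additive identity in $R_\infty$, it is not multiplicatively absorbing because $0 \cdot \infty = \infty$, which is precisely what prevents $R_\infty$ from being an inverse semiring in the strict (non-zeroless) sense.
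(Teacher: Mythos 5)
Your verification is correct, and the paper in fact omits a proof of this proposition entirely, treating it as a routine check; your case analysis (absorption of $\infty$ under both operations, $\infty$ as its own weak inverse, commutativity upgrading weak inverses to genuine ones) is exactly the intended argument. Your closing remark correctly identifies $0 \cdot \infty = \infty$ as the reason the structure is only a \emph{zeroless} inverse semiring, matching the paper's discussion of non-absorptive examples.
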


We now have the following example.

\begin{example}\label{ex:reimann_sphere}
The Riemann sphere $\C_\infty$, with the natural addition and multiplication but with $0 \cdot \infty = \infty$ is a non-absorptive inverse semiring. Of course, we can obtain a proper inverse semiring by adjoining $0$.
\end{example}

We will see some more examples of zeroless semirings later in the paper.

\section{Fundamental results}\label{sec:fundamental}

In this section, we establish some basic facts about inverse semirings. We begin by looking at the interaction between the additive inverse monoid and the multiplicative monoid. We then study the element $0_1$ in some detail.

The first result establishes that multiplication by an idempotent is absorptive.

\begin{proposition}\label{prp:absorb}
Let $R$ be an inverse semiring and let $x,y \in R$. Then $0_x \cdot y = 0_{xy}$ and $y \cdot 0_x = 0_{yx}$.
\end{proposition}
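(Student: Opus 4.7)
The plan is to reduce everything to the identity $(-x)\cdot y = -(xy)$ and its right-handed analogue, after which both equations fall out of distributivity.

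First I would prove the lemma $(-x)\cdot y = -(xy)$. By \cref{prop:commuting_idemponents} and \cref{rem:commuting_idemponents}, additive inverses in the commutative inverse monoid $(R,+,0)$ are unique, so it suffices to check that $(-x)\cdot y$ satisfies the two inverse axioms with respect to $xy$. Using right distributivity,
\[
xy + (-x)y + xy = \bigl(x + (-x) + x\bigr)y = xy,
\]
and symmetrically $(-x)y + xy + (-x)y = (-x)y$, which identifies $(-x)y$ as the unique additive inverse of $xy$. The dual argument using left distributivity yields $y\cdot(-x) = -(yx)$.

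With this in hand, the main claim is immediate: by right distributivity,
\[
0_x \cdot y = \bigl(x + (-x)\bigr)\cdot y = xy + (-x)y = xy + \bigl(-(xy)\bigr) = 0_{xy},
\]
and symmetrically $y \cdot 0_x = yx + y\cdot(-x) = yx + (-(yx)) = 0_{yx}$.

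The only mild subtlety is justifying the uniqueness of additive inverses used in identifying $(-x)y$ with $-(xy)$, but this has already been flagged in \cref{rem:commuting_idemponents} for commutative inverse monoids, so no real obstacle remains. Everything else is a direct application of distributivity.
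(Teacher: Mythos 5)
Your proof is correct and follows essentially the same route as the paper, which simply computes $0_{xy} = xy - xy = (x-x)y = 0_x y$ and appeals to duality for the other equation. The only difference is that you make explicit the lemma $(-x)\cdot y = -(xy)$ (via uniqueness of inverses) that the paper's step $xy - xy = (x-x)y$ leaves implicit, which is a reasonable bit of extra care rather than a new idea.
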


\begin{proof}
Simply observe $0_{xy} = xy-xy = (x-x)y = 0_xy$. The other equation is dual.
\end{proof}
This means, in particular, that one can obtain the idempotent associated to any element by simply multiplying it by the distinguished element $0_1$.

It also follows from \cref{prp:absorb} that just as the idempotent elements of an inverse monoid form a semilattice, the additive idempotents of an inverse semiring $R$ form an idempotent semiring $E(R)$, where $0_1$ is the multiplicative identity.

\begin{corollary}\label{cor:idempotent_of_product}
Let $R$ be an inverse semiring and let $x,y \in R$. Then $0_x \cdot 0_y = 0_{xy}$. Hence $E(R)$ is an idempotent semiring.
\end{corollary}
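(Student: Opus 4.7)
The plan is to derive the identity $0_x \cdot 0_y = 0_{xy}$ by applying \cref{prp:absorb} twice and then collapsing the resulting double subscript. Specifically, first use the formula $0_x \cdot z = 0_{xz}$ with $z = 0_y$ to obtain $0_x \cdot 0_y = 0_{x \cdot 0_y}$. Then apply the dual formula $w \cdot 0_y = 0_{wy}$ with $w = x$ to get $x \cdot 0_y = 0_{xy}$, whence $0_x \cdot 0_y = 0_{0_{xy}}$. Finally, I would note that for any additive idempotent $e$ in an inverse monoid we have $0_e = e - e = e + e = e$ (since an idempotent is its own additive inverse), so $0_{0_{xy}} = 0_{xy}$ and the equation is established.

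For the second assertion, I need to verify the idempotent semiring axioms on $E(R)$. Closure under addition, commutativity, associativity and the idempotency of $+$ all follow from the fact (stated earlier) that $E(R)$ is an additive semilattice with the inherited zero. Closure under multiplication is precisely the equation $0_x \cdot 0_y = 0_{xy}$ just proved. Associativity of multiplication, distributivity, and the absorption of $0$ are inherited from $R$. The only remaining point is identifying the multiplicative identity: since $1 \in R$ yields $0_1 \in E(R)$, and by \cref{prp:absorb} we have $0_1 \cdot 0_x = 0_{1 \cdot x} = 0_x$ and symmetrically $0_x \cdot 0_1 = 0_x$, the element $0_1$ serves as the unit of $E(R)$.

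There is no real obstacle here; the whole argument is a direct application of \cref{prp:absorb} together with the self-inverse property of additive idempotents. The only subtle point worth flagging is that the semiring $E(R)$ does \emph{not} inherit the multiplicative identity $1$ of $R$ (which is typically not additively idempotent), but rather uses $0_1$ as its unit, consistent with the remark preceding the corollary that $0_1$ is the distinguished idempotent through which the map $x \mapsto 0_x$ factors.
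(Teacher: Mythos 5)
Your proof of the identity $0_x \cdot 0_y = 0_{0_{xy}} = 0_{xy}$ is exactly the paper's argument: two applications of \cref{prp:absorb} followed by the observation that $0_e = e$ for an additive idempotent $e$. The additional verification that $E(R)$ satisfies the idempotent semiring axioms with unit $0_1$ is correct and simply spells out what the paper leaves to the surrounding discussion.
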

\begin{proof}
We have $0_x0_y = 0_{x0_y} = 0_{0_{xy}} = 0_{xy}$, as required.
\end{proof}
Note that $E(R)$ is generally not a sub-semiring of $R$, since $1$ is seldom an idempotent. It is, however, a two-sided ideal by the above proposition.

Next, we establish that multiplication by $-1$ acts as one might hope.

\begin{proposition}
Let $R$ be an inverse semiring and let $x \in R$. Then $-1 \cdot x = -x = x \cdot -1$.
\end{proposition}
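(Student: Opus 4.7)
The plan is to use the uniqueness of additive inverses, which holds because the additive monoid of $R$ is a commutative inverse monoid (cf.\ \cref{rem:commuting_idemponents}). It thus suffices to show that $-1 \cdot x$ satisfies the two defining relations of an inverse to $x$, namely $x + y + x = x$ and $y + x + y = y$ with $y = -1 \cdot x$.

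First I would observe that since $-1$ is the additive inverse of $1$ in $R$, we have the identities $1 + (-1) + 1 = 1$ and $(-1) + 1 + (-1) = -1$ in the additive inverse monoid. Multiplying each of these on the right by $x$ and applying the distributive law termwise then yields
\begin{align*}
x + (-1 \cdot x) + x &= (1 + (-1) + 1)\cdot x = 1 \cdot x = x, \\
(-1 \cdot x) + x + (-1 \cdot x) &= ((-1) + 1 + (-1)) \cdot x = -1 \cdot x,
\end{align*}
so $-1 \cdot x$ is an inverse of $x$ and hence equals $-x$ by uniqueness. The symmetric argument (distributing on the left instead, using the same two identities among $1$ and $-1$) gives $x \cdot -1 = -x$.

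I do not expect any real obstacle here: the only subtlety is to make sure we have appealed to uniqueness of inverses in the commutative inverse monoid setting, as noted in \cref{rem:commuting_idemponents}, rather than trying to prove the equality by any additive cancellation (which need not hold in an inverse semiring). Everything else is pure distributivity.
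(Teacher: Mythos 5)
Your proof is correct and takes essentially the same approach as the paper: both arguments use distributivity to show that $-1 \cdot x$ is an additive inverse of $x$ and then conclude by uniqueness of inverses. If anything, your version is slightly more careful, since you verify both defining identities of an inverse explicitly, whereas the paper only records $x + (-1)\cdot x = 0_1 x = 0_x$ and its symmetric counterpart before invoking uniqueness.
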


\begin{proof}
Consider $x + -1\cdot x = (1 - 1)x = 0_1 x = 0_x$. A symmetric version of this argument gives $-1 \cdot x + x = 0_x$. The result follows by the uniqueness of inverses.
\end{proof}

Note that multiplication respects the order.

\begin{proposition}\label{ledotmonotone}
Let $R$ be an inverse semiring and let $x \le y$ and $u \le v$. Then $xu \le yv$.
\end{proposition}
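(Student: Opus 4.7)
The plan is to unpack $x \le y$ and $u \le v$ via the original definition (\cref{InvSR preorder}): pick idempotents $e,f \in R$ with $x + e = y$ and $u + f = v$. Then expand the product $yv$ using distributivity:
\begin{equation*}
yv \;=\; (x+e)(u+f) \;=\; xu + xf + eu + ef.
\end{equation*}
The goal is to exhibit an idempotent $z$ with $xu + z = yv$, where $z := xf + eu + ef$.

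The key step will be to show that $z$ is idempotent. Since $e$ and $f$ are idempotents, we have $e = 0_e$ and $f = 0_f$. Then \cref{prp:absorb} gives $xf = x \cdot 0_f = 0_{xf}$ and $eu = 0_e \cdot u = 0_{eu}$, both of which are idempotents. Likewise, \cref{cor:idempotent_of_product} (or a direct application of \cref{prp:absorb} twice) yields that $ef$ is an idempotent. Finally, by \cref{prp:sum_of_idempotents}, a sum of idempotents is again an idempotent, so $z$ is idempotent, which gives $xu \le yv$ as required.

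There is no real obstacle here: once the correct choice of witness $z$ is identified by expanding the product, the verification is a direct bookkeeping exercise combining \cref{prp:absorb}, \cref{cor:idempotent_of_product} and the fact that $E(R)$ is closed under addition. If desired, one could streamline the argument slightly by using the equivalent form $x + 0_y = y$ from \cref{prp:preorderadd}, so that every additional term produced by distributivity is manifestly of the form $0_{(-)}$; but this is a matter of taste rather than substance.
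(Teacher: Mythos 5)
Your proof is correct and follows essentially the same route as the paper: write $y = x + e$ and $v = u + f$ with $e,f$ idempotent, expand $yv = xu + xf + eu + ef$ by distributivity, and observe via \cref{prp:absorb} that the extra terms are idempotents. Your additional care in citing \cref{prp:sum_of_idempotents} to close the sum of idempotents under addition is a harmless elaboration of the same argument.
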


\begin{proof}
By assumption, we can write $y = x + z_1$ as $v = u + z_2$ for additive idempotents $z_1$ and $z_2$. Therefore, we have $yv = xu + xz_2 + z_1u + z_1z_2$. But $xz_2$, $z_1u$ and $z_1z_2$ are idempotents by \cref{prp:absorb} and so $yv \le xu$ as required.
\end{proof}

The element $0_1$ controls the behaviour of an inverse semiring in the following sense.

\begin{proposition}\label{prp:zeroone}
Let $R$ be an inverse semiring. Then
\begin{enumerate}[(i)]
\item $R$ is a ring if and only if $0_1 = 0$,
\item $R$ is an idempotent semiring if and only if $0_1 = 1$.
\end{enumerate}
\end{proposition}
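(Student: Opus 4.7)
The plan is to leverage \cref{prp:absorb} (in the form $0_x = 0_1 \cdot x$) as the key identity linking the distinguished idempotent $0_1$ to the idempotent $0_x$ attached to each element $x$. The whole proposition then reduces to two short biconditionals, which I would handle separately.

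For part (i), the forward direction is immediate: if $R$ is a ring, then its additive monoid is a group, so $0_1 = 1 + (-1) = 0$. For the converse, suppose $0_1 = 0$. Using $0_x = 0_1 \cdot x$ together with the absorption axiom $0 \cdot x = 0$ of a semiring, I conclude $0_x = 0$ for every $x \in R$. Since an inverse monoid is a group exactly when $0$ is its only idempotent (equivalently, when $0_x = 0$ for all $x$), the additive monoid of $R$ is a group, so $R$ is a ring.

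For part (ii), the main observation is that in an inverse monoid an element $x$ satisfies $x + x = x$ if and only if $x$ is idempotent, if and only if $x = 0_x$. For the forward direction, if $R$ is an idempotent semiring then $1 + 1 = 1$, so $1 = 0_1$. Conversely, if $0_1 = 1$ then for every $x \in R$ we have $0_x = 0_1 \cdot x = 1 \cdot x = x$, so $x + x = 0_x + 0_x = 0_x = x$, and $R$ is idempotent.

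There is no real obstacle here; the only thing worth being careful about is stating (or implicitly invoking) the equivalences ``additive monoid is a group $\iff$ every $0_x$ equals $0$'' and ``$x + x = x \iff x = 0_x$'', which are standard facts about inverse monoids already implicit in the background section. Once those are in hand, the rest is a two-line computation in each direction.
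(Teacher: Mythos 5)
Your proof is correct and follows essentially the same route as the paper: both reverse directions hinge on the identity $0_x = 0_1\cdot x$ from \cref{prp:absorb}, and the forward directions are the same short computations the paper dismisses as clear. No issues.
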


\begin{proof}
The forward implications are clear. If $0_1 = 0$ then for all $x \in R$ we have $0_x = 0_1\cdot x = 0 \cdot x = 0$, and so $x$ is (additively) invertible.
On the other hand, if $0_1 = 1$, then $0_x = 0_1\cdot x= 1\cdot x = x$, and so $x$ is (additively) idempotent.
\end{proof}

We now take a look at the groups associated to each idempotent.

\begin{proposition}
Let $R$ be an inverse semiring. Then $\G(0)$ is a two-sided ideal.
\end{proposition}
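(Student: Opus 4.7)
The plan is to verify the three closure conditions for a two-sided ideal: closure under addition (with $0 \in \G(0)$), closure under additive inverses, and absorption by arbitrary ring elements on both sides. Recall that $\G(0) = \{s \in R \mid 0_s = 0\}$, which by the earlier proposition on groups $\G(z)$ is already known to be an abelian group with identity $0$. So the additive group structure of $\G(0)$ is immediate: it contains $0$, is closed under $+$, and is closed under taking additive inverses.

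The only real content is the two-sided absorption condition: for $s \in \G(0)$ and $r \in R$, I need $rs \in \G(0)$ and $sr \in \G(0)$, i.e.\ $0_{rs} = 0$ and $0_{sr} = 0$. For this I will apply \cref{cor:idempotent_of_product}, which gives $0_{rs} = 0_r \cdot 0_s$. Since $0_s = 0$ by hypothesis, this reduces to $0_{rs} = 0_r \cdot 0 = 0$ using the absorption of $0$ in the semiring $R$. The argument for $0_{sr} = 0$ is symmetric.

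There is essentially no obstacle here — once one invokes \cref{cor:idempotent_of_product}, the computation is a one-line consequence of $0 \cdot x = 0$, which is part of the semiring axioms. The proof should be very short, and the only thing to be careful about is to explicitly identify which of the preceding results is being invoked (the group structure on $\G(0)$ for additive closure, and \cref{cor:idempotent_of_product} for multiplicative absorption), rather than recomputing everything from scratch using \cref{prp:sum_of_idempotents} and \cref{prp:absorb} directly.
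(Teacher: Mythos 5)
Your proof is correct and follows essentially the same route as the paper: both reduce the two-sided absorption to the computation $0_{rs} = 0_r \cdot 0_s = 0_r \cdot 0 = 0$ via \cref{cor:idempotent_of_product} and the semiring absorption axiom, with the additive closure being immediate from the group structure of $\G(0)$. No changes needed.
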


\begin{proof}
Note that $\G(0)$ contains $0$ and is closed under addition. To check that multiplication is absorptive take $x \in \G(0)$ and $y \in R$. We must show that $0_{xy} = 0$. But $0_{xy} = 0_x0_y = 0 \times 0_y = 0$. Multiplication on the other side is similar.
\end{proof}

\begin{proposition}
Let $R$ be an inverse semiring. Then $\G(0_1)$ is a ring.
\end{proposition}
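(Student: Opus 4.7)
The plan is to show that $\G(0_1)$ inherits the structure of a sub-semiring of $R$ and combine this with the fact, recorded earlier, that $\G(z)$ is always an abelian group under addition. Since an abelian group with a compatible multiplicative monoid structure and distributivity is exactly a ring, this will suffice.

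First I would invoke the proposition giving that $\G(0_1)$ is an abelian group under $+$ with identity $0_1$. The remaining checks are therefore all about multiplication. The key closure property is that if $x, y \in \G(0_1)$, meaning $0_x = 0_y = 0_1$, then $xy \in \G(0_1)$; this follows immediately from \cref{cor:idempotent_of_product}, since
\[ 0_{xy} = 0_x \cdot 0_y = 0_1 \cdot 0_1 = 0_1, \]
using that $0_1$ is idempotent. Clearly $1 \in \G(0_1)$ because $0_1 = 0_1$, so $\G(0_1)$ contains the multiplicative identity. Associativity of multiplication and both distributivity laws are inherited directly from $R$.

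The only potentially subtle point is compatibility of the multiplicative structure with the correct additive identity: within $\G(0_1)$, the additive identity is $0_1$ rather than $0$, so we must verify that $0_1$ acts as a multiplicative absorbing element for elements of $\G(0_1)$. This is exactly the content of \cref{prp:absorb}: for any $x \in \G(0_1)$, $0_1 \cdot x = 0_x = 0_1$ and $x \cdot 0_1 = 0_x = 0_1$. I do not anticipate any real obstacle; the main conceptual point is just to notice that ``being a ring'' means being a ring with respect to its own additive identity $0_1$, not with respect to the zero of $R$.
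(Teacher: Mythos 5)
Your proof is correct and follows essentially the same route as the paper's: cite that $\G(0_1)$ is an abelian group containing $1$, and verify closure under multiplication via $0_{xy} = 0_x 0_y = 0_1 0_1 = 0_1$. Your additional check that $0_1$ absorbs multiplicatively inside $\G(0_1)$ is a reasonable extra observation (the paper leaves it implicit, and it also follows from distributivity plus additive cancellation once the group structure is in hand), but it does not change the argument.
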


\begin{proof}
We have that $\G(0_1)$ is an abelian group and contains the multiplicative unit $1$. We need only check that it is closed under multiplication. Let $x,y \in \G(0_1)$. We must show that $0_{xy} = 0_{1}$. But $0_{xy} = 0_x0_y = 0_10_1 = 0_1$.
\end{proof}

We call $\G(0_1)$ the \emph{ring of scalars} of $R$. Multiplication by elements of $\G(0_1)$ will scale an element without changing the associated idempotent. 
This name is justified by the following proposition.

\begin{proposition}\label{prop:bimodule_over_scalars}
Let $R$ be an inverse semiring and $z$ an additive idempotent. Then $\G(z)$ is a $\G(0_1)$-bimodule where for $s \in \G(0_1)$ and $s \in \G(z)$, we have $s \cdot x = sx$ and $x \cdot s = xs$.
\end{proposition}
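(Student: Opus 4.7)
The plan is to unpack what it means for $\G(z)$ to be a $\G(0_1)$-bimodule under the restricted multiplication of $R$. The bimodule axioms --- bilinearity, associativity of scalar actions with each other and with multiplication, and unitality --- are all instances of the corresponding axioms of the inverse semiring $R$, so these are essentially free once everything is well-typed. The only real content is to verify that the actions are \emph{closed}, i.e.\ that $sx$ and $xs$ actually lie in $\G(z)$ whenever $s \in \G(0_1)$ and $x \in \G(z)$.

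First I would check closure of the left action. Given $s \in \G(0_1)$ and $x \in \G(z)$, we have by definition $0_s = 0_1$ and $0_x = z$. Applying \cref{cor:idempotent_of_product} gives $0_{sx} = 0_s \cdot 0_x = 0_1 \cdot z$. Then \cref{prp:absorb} yields $0_1 \cdot z = 0_{1 \cdot z} = 0_z$, and since $z$ is already idempotent, $0_z = z$. Hence $0_{sx} = z$, so $sx \in \G(z)$. The right action is handled symmetrically using the dual half of \cref{prp:absorb}. Note that addition within $\G(z)$ is already well-defined because $\G(z)$ is an abelian group by the proposition following \cref{prop:commuting_idemponents}.

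Having established closure, I would simply observe that the bimodule axioms
\[
s(x + y) = sx + sy, \quad (s+t)x = sx + tx, \quad 1 \cdot x = x, \quad (st)x = s(tx),
\]
and their right-handed analogues, together with the two-sided compatibility $(sx)t = s(xt)$, are precisely the distributivity, associativity, and unit laws of multiplication in $R$, restricted to the relevant subsets. No further calculation is required.

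The main (and only) subtlety is the closure check, which hinges on recognising that $0_1$ acts as the identity of the semilattice $E(R)$ via \cref{prp:absorb} --- in particular on $z$ itself --- so that the idempotent attached to the product is exactly $z$. Everything else is a direct restriction of structure already present in $R$.
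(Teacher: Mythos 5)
Your proposal is correct and matches the paper's proof: both reduce the statement to the closure check and verify it via the same computation $0_{sx} = 0_s 0_x = 0_1 \cdot z = z$, treating the bimodule axioms as inherited directly from the semiring structure of $R$. Your version is slightly more explicit about why $0_1 \cdot z = z$, which is a nice touch but not a different argument.
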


\begin{proof}
Recall that $G(z)$ is an abelian group. The module operation is simply the inverse semiring multiplication. Hence we need only check that if $s \in \G(0_1)$ and $x \in \G(z)$ then $s \cdot x \in \G(z)$ and that $x \cdot s \in \G(z)$. For the first case, consider $0_{sx} = 0_s 0_x = 0_1 z = z$, and so $s\cdot x \in \G(z)$ as required. A symmetric argument gives the dual result.
\end{proof}

\begin{example}
Let $\R[x]_B$ be the inverse semiring of bounded polynomials from \cref{ex:polynomials}. Then $0_1 = (0,0)$ and $\G(0,0)$ consists of the elements $(r,0)$ where $r \in \R$. Hence, $\G(0_1)$ is precisely the field of scalars $\R$. A similar result holds if $\R$ is replaced with any other ring.
\end{example}

The following proposition shows that $\G(0)$ and $\G(0_1)$ always satisfy the conditions of \cref{constr:from_bimodule}.

\begin{proposition}
Let $R$ be an inverse semiring. The map $\z\colon \G(0) \to \G(0_1)$ with $\z(x) = x + 0_1$ is a $\G(0_1)$-bimodule homomorphism satisfying $\z(x) \cdot y = x \cdot \z(y)$.
\end{proposition}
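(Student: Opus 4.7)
The plan is to verify each of the claimed properties in turn, relying primarily on \cref{prp:absorb} and \cref{prp:sum_of_idempotents}, together with the fact that $0_1$ is an additive idempotent. Nothing here looks hard; the main point is simply to be careful with where each element lives.

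First I would check that $\z$ is well-defined, i.e., that $\z(x) \in \G(0_1)$ whenever $x \in \G(0)$. Using \cref{prp:sum_of_idempotents}, we have $0_{\z(x)} = 0_{x + 0_1} = 0_x + 0_{0_1} = 0 + 0_1 = 0_1$. Next, additivity of $\z$ follows immediately from commutativity of addition together with idempotence of $0_1$: $\z(x) + \z(y) = x + y + 0_1 + 0_1 = x + y + 0_1 = \z(x + y)$.

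For $\G(0_1)$-bilinearity, let $s \in \G(0_1)$. The key auxiliary observation is that $s \cdot 0_1 = 0_{s \cdot 1} = 0_s = 0_1$ by \cref{prp:absorb} and the definition of $\G(0_1)$; symmetrically $0_1 \cdot s = 0_1$. Hence $s \cdot \z(x) = s x + s \cdot 0_1 = sx + 0_1 = \z(sx)$, and dually for the right action. This establishes that $\z$ is a $\G(0_1)$-bimodule homomorphism.

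Finally, for the compatibility $\z(x) \cdot y = x \cdot \z(y)$ with $x, y \in \G(0)$, both sides should collapse to $xy$. Indeed, by \cref{prp:absorb} we have $0_1 \cdot y = 0_{1 \cdot y} = 0_y = 0$ since $y \in \G(0)$, so $\z(x) \cdot y = xy + 0_1 \cdot y = xy + 0 = xy$, and the symmetric computation gives $x \cdot \z(y) = xy$ as well. I do not anticipate any serious obstacle; the only thing one needs to resist is mistaking $\z$ for a semiring homomorphism, since it is only a bimodule map.
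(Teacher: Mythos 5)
Your proposal is correct and follows essentially the same route as the paper: it uses \cref{prp:absorb} to reduce $s\cdot 0_1$ to $0_1$ for scalars and $0_1\cdot y$ to $0$ for $y\in\G(0)$, exactly as the paper does. The only difference is that you additionally spell out well-definedness and additivity, which the paper leaves implicit.
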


\begin{proof}
We begin by showing that $\z$ is a bimodule homomorphism. Let $x \in \G(0)$ and $s \in \G(0_1)$. Then $s \cdot \z(x) = s \cdot (x + 0_1) = s\cdot x + s \cdot 0_1 = s \cdot x + 0_1 = \z(s \cdot x)$. Multiplication on the right-hand side is proved similarly. Finally, if $x,y \in \G(0)$ then
\[\z(x) \cdot y = (x + 0_1) \cdot y = xy + 0_1y = xy + 0 = xy + x 0_1 = x \cdot (y + 0_1) = x \cdot \z(y)\]
and so the result follows.
\end{proof}

Now applying \cref{constr:from_bimodule}, we arrive at the following proposition.
\begin{proposition}
Let $R$ be an inverse semiring. Then there is an induced homomorphism $\h\colon \I(\z\colon \G(0) \to \G(0_1)) \to R$ sending $s \in \G(0_1)$ to $s \in R$ and $x \in \G(0)$ to $x \in R$. Moreover, if $R$ is not a ring, this homomorphism is injective. We call $\I(\z)$ the \emph{heart} of $R$.
\end{proposition}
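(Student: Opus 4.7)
The plan is to define $\h$ on elements as stated and then verify it respects the operations of \cref{constr:from_bimodule} case by case, before deducing injectivity from the disjointness of $\G(0)$ and $\G(0_1)$ inside $R$. Since the underlying set of $\I(\z)$ is $\G(0_1) \sqcup \G(0)$ and both summands already sit inside $R$, the prescription is unambiguous as a function, even in situations where the subsets happen to overlap.

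For preservation of the operations, sums and products of elements from the same component match those in $R$ essentially by design, so the real work lies in the mixed cases. For $s + x$ with $s \in \G(0_1)$ and $x \in \G(0)$, the heart computes $\z(x) + s = (x + 0_1) + s$; to match $s + x$ in $R$ I would observe that $0_{s+x} = 0_1 + 0 = 0_1$ by \cref{prp:sum_of_idempotents}, so $s + x \in \G(0_1)$ and hence $(s+x) + 0_1 = s + x$. For the mixed products $s \cdot x$ and $x \cdot s$, the formulas in the construction agree verbatim with those in $R$, and the results lie in $\G(0)$ in both settings by \cref{prp:absorb}. The remaining substantive check is the product $a a'$ of two elements of $\G(0)$, which the heart defines as $\z(a) \cdot a' = (a + 0_1) a'$; distributivity and \cref{prp:absorb} give $a a' + 0_1 \cdot a' = a a' + 0_{a'} = a a'$, since $a' \in \G(0)$.

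For injectivity I would invoke \cref{prp:zeroone}: the hypothesis that $R$ is not a ring means $0_1 \ne 0$. Consequently, any $y \in \G(0) \cap \G(0_1)$ inside $R$ would satisfy $0 = 0_y = 0_1$, which is impossible, so the images of $\G(0)$ and $\G(0_1)$ in $R$ are disjoint. Since $\h$ is the identity embedding on each component, this disjointness promotes injectivity on each piece to global injectivity.

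The main obstacle is not conceptual but rather the bookkeeping of keeping straight which ambient structure each identity is being computed in, especially for the mixed cases where an element constructed by a formula in the heart must be re-interpreted inside $R$ via $\h$. The collapse that occurs when $R$ is a ring --- where $\G(0)$ and $\G(0_1)$ coincide inside $R$ and so two distinct elements of $\I(\z)$ can map to the same element of $R$ --- shows why the non-ring hypothesis is essential for injectivity.
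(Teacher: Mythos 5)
Your proposal is correct and follows essentially the same route as the paper: a case-by-case verification that $\h$ respects the operations of \cref{constr:from_bimodule}, with the only substantive checks being the mixed sum and the $\G(0)$-$\G(0)$ product, followed by injectivity via the disjointness of $\G(0)$ and $\G(0_1)$ when $0_1 \ne 0$. The small algebraic variations (e.g.\ absorbing $0_1$ into $s+x$ via $0_{s+x}=0_1$ rather than into $y$ directly) are immaterial.
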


\begin{proof}
The map $\h$ clearly preserves $0$ and $1$. To see that it preserves binary addition, the only non-trivial case is where $x \in \G(0)$ and $y \in \G(0_1)$. Then in $\I(\z)$ we have $x + y = \z(x) + y$, and so $\h(x + y) = \z(x) + y = x + 0_1 + y = x + y$, since $0_y = 0_1$. On the other hand, $\h(x) + \h(y) = x + y$, as expected.

For multiplication, there are three non-trivial cases. If $x, y \in G(0)$, then $\h(xy) = \h(\z(x) \cdot y) = (x + 0_1)y = xy + 0 = xy = h(x)h(y)$. If $x \in \G(0)$ and $y \in \G(0_1)$ then $\h(xy) = x\cdot y = xy = h(x)h(y)$, as required. The dual case is similar. Thus, $\h$ is a semiring homomorphism.

Finally, if $R$ is not a ring, then $0_1 \ne 0$ and so $\G(0)$ and $\G(0_1)$ are disjoint. Thus, the $\h$ is clearly injective.
\end{proof}

\begin{corollary}\label{cor:two_idempotents_from_bimodules}
\Cref{constr:from_bimodule} gives rise to \emph{every} inverse semiring with exactly two idempotents.
\end{corollary}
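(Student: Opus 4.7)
The plan is to show that the homomorphism $\h\colon \I(\z) \to R$ constructed in the preceding proposition is a semiring isomorphism when $R$ has exactly two additive idempotents. The preceding proposition already gives us a semiring homomorphism which is injective whenever $R$ is not a ring, so only two loose ends remain: verifying that the hypothesis forces $R$ not to be a ring (so that the injectivity clause applies), and verifying surjectivity of $\h$.

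For the first point, I would invoke \cref{prp:zeroone}(i): $R$ is a ring iff $0_1 = 0$, i.e., iff $E(R) = \{0\}$. Since $R$ has two distinct idempotents, $R$ is not a ring, and moreover the two idempotents must be precisely $0$ and $0_1$, since these are always idempotents in any inverse semiring. Hence \emph{every} element $x \in R$ satisfies $0_x \in \{0, 0_1\}$, which means $x \in \G(0) \cup \G(0_1)$, and this union is disjoint because $0 \neq 0_1$.

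For surjectivity, I note that the underlying set of $\I(\z)$ is exactly $\G(0_1) \sqcup \G(0)$, and by construction $\h$ is the identity on each summand. The observation in the previous paragraph therefore shows that $\h$ is a bijection, and combined with the fact (already proved) that it is a semiring homomorphism, this makes $\h$ an isomorphism. Thus $R$ is isomorphic to $\I(\z\colon \G(0) \to \G(0_1))$, which is an instance of \cref{constr:from_bimodule}.

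There is no real obstacle here: the heavy lifting (definition of $\z$, checking the bimodule and compatibility conditions, verifying $\h$ is a homomorphism and injective) was done in the two preceding propositions. The corollary is essentially the observation that when exactly two idempotents are present, the image of $\h$ exhausts $R$ because every element is accounted for by its associated idempotent.
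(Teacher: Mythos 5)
Your proof is correct and matches the paper's intent exactly: the corollary is stated without proof precisely because it follows from the preceding propositions by the observation you make, namely that two idempotents forces $E(R) = \{0, 0_1\}$ with $0 \ne 0_1$, so $R = \G(0) \sqcup \G(0_1)$ and the injective homomorphism $\h$ is also surjective, hence an isomorphism. No gaps.
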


\Cref{prp:zeroone} can be used to describe two relevant adjunctions.
As discussed before, both rings and idempotent semirings are examples of inverse semirings and we can view these relationships as subcategory inclusions.
\begin{proposition}\label{prop:reflections}
Both the inclusion $\Ring \hookrightarrow \InvRig$ and the inclusion $\IdemRig \hookrightarrow \InvRig$ have left adjoints given by quotienting by the congruence generated by $0_1 \sim 0$ and $0_1 \sim 1$, respectively. In the latter case, the reflection of an inverse semiring $R$ is isomorphic to $E(R)$.
\end{proposition}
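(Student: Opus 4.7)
The plan is to handle the two adjunctions in parallel, using \Cref{prp:zeroone} as the key tool, and then to identify the reflection into idempotent semirings with $E(R)$ via the map $x \mapsto 0_x$.

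First I would show the two quotients have the right form. Since inverse semirings form a variety (see \Cref{prop:commuting_idemponents}), quotienting by any semiring congruence yields an inverse semiring, so $R/\langle 0_1 \sim 0\rangle$ and $R/\langle 0_1 \sim 1\rangle$ make sense as inverse semirings. In each quotient $0_1$ becomes $0$ or $1$ respectively, so by \Cref{prp:zeroone} the quotients land in $\Ring$ and $\IdemRig$.

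Next I would verify the universal property. Given a homomorphism $f\colon R \to S$ with $S$ a ring (resp.\ an idempotent semiring), $f$ preserves $0_1$, so $f(0_1^R) = 0_1^S$, which equals $0$ (resp.\ $1$) by \Cref{prp:zeroone}. Hence $f$ sends $0_1 \sim 0$ (resp.\ $0_1 \sim 1$) to a trivial relation and factors uniquely through the quotient; this is precisely the required reflection.

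For the final claim, I would consider the map $\phi\colon R \to E(R)$ defined by $\phi(x) = 0_x = 0_1 \cdot x$. It preserves sums by \Cref{prp:sum_of_idempotents} and products by \Cref{cor:idempotent_of_product}; it sends $0$ to $0$ and both $1$ and $0_1$ to $0_1$, which is the multiplicative unit of $E(R)$. So $\phi$ identifies $0_1$ with $1$ and therefore descends to $\overline{\phi}\colon R/\langle 0_1 \sim 1\rangle \to E(R)$. I would then show this is an isomorphism by exhibiting as inverse the composite $E(R) \hookrightarrow R \twoheadrightarrow R/\langle 0_1 \sim 1\rangle$: for $e \in E(R)$, we have $\overline{\phi}([e]) = 0_e = e$, and for arbitrary $x \in R$, $[0_x] = [0_1 \cdot x] = [1 \cdot x] = [x]$ in the quotient, since $0_1 \sim 1$.

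The only genuine subtlety I anticipate is bookkeeping for the final step --- making sure the forgetful direction $E(R) \hookrightarrow R$ is only an additive/multiplicative inclusion at the level of underlying sets (it is not a semiring homomorphism because $1 \notin E(R)$ in general), but that this does not matter because we only need the composite to quotient to be a semiring homomorphism, which it is because it factors through $\phi$ followed by the natural quotient.
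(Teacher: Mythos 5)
Your proof is correct and follows essentially the same route as the paper: both reduce the defining equations of rings and idempotent semirings to $0_1 = 0$ and $0_1 = 1$ via \cref{prp:zeroone}, and both identify the idempotent reflection with $E(R)$ through the homomorphism $x \mapsto 0_x$ (using \cref{prp:sum_of_idempotents} and \cref{cor:idempotent_of_product}). The only difference is presentational --- the paper invokes the general fact that adding equations to a variety yields a reflective subcategory and argues that $x \mapsto 0_x$ induces exactly the reflection congruence, whereas you verify the universal property and construct the explicit two-sided inverse by hand.
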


\begin{proof}
 Adding equations to a variety always yields a reflective subcategory whose unit takes the largest quotient making the equations hold. A ring is an inverse semiring $R$ such that $x - x = 0$ for all $x \in R$, and an idempotent semiring is an inverse semiring such that $x+x = x$. By \cref{prp:zeroone} it is enough to ask that $0_1 = 0$ and $0_1 = 1$, respectively.

 For the reflection to idempotent semirings, the condition can equivalently be described as setting $x = 0_x$ for all $x$. Note that for an inverse semiring $R$, the map from $R$ to $E(R)$ sending $x$ to $0_x$ is a semiring homomorphism by \cref{prp:sum_of_idempotents} and \cref{cor:idempotent_of_product}. It is clearly surjective, but is injective on idempotents. Thus, it makes precisely the identification made by the idempotent reflection.
\end{proof}

\begin{remark}
One might also ask about \emph{right} adjoints to the above inclusions. Strictly speaking, these do not exist, but there is a sense in which they almost do. For example, the left adjoint $E$ of the inclusion $\IdemRig \hookrightarrow \InvRig$ would also be the right adjoint if we were working with \emph{non-unital} (semi)rings. The issue is that the candidate counit $E(R) \hookrightarrow R$ fails to be a semiring homomorphism because it does not send $0_1$ to $1$.

The case of $\Ring \hookrightarrow \InvRig$ is, perhaps, even more interesting. If we work with non-unital structures then $G(0)$ provides the right adjoint with the counit given by the inclusion $G(0) \hookrightarrow R$. If we instead work with zeroless semirings, then $G(0_1)$ gives the right adjoint, with counit $G(0_1) \hookrightarrow R$.
\end{remark}

\section{Algebras and free structures}

Polynomial rings can be understood as free commutative algebras over a ring. In this section, we will discuss the analogous case of free algebras over an inverse semiring.

The simplest case to consider is the free inverse semiring on the empty set --- that is, the initial inverse semiring.

\begin{proposition}
The initial inverse semiring is $\Z_0$ --- the ring of integers $\Z$ with a new zero adjoined (see \cref{def:adjoin_zero}).
\end{proposition}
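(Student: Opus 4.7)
The plan is to verify the universal property of $\Z_0$ directly: for each inverse semiring $R$, I will produce a unique homomorphism $\phi\colon \Z_0 \to R$.

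For uniqueness, any homomorphism $\phi$ must satisfy $\phi(0) = 0_R$ and $\phi(1) = 1_R$. Since $\Z \subseteq \Z_0$ is generated from $1$ under addition and additive inversion, the value of $\phi$ on each integer is forced to be the appropriate multiple of $1_R$ (using the earlier result that $-1 \cdot x = -x$). In particular, $\phi(\widehat{0}) = \phi(1 - 1) = 1_R - 1_R = 0_{1_R}$, which is in general distinct from $0_R$. Hence $\phi$ is uniquely determined.

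For existence, I will first construct the map on $\Z$ and then extend by sending the newly adjoined zero to $0_R$. Since $\G(0_{1_R})$ is a ring containing $1_R$ by the earlier proposition, the initiality of $\Z$ among rings yields a unique ring homomorphism $\iota\colon \Z \to \G(0_{1_R})$. Composing with the inclusion into $R$ defines $\phi|_\Z$, and I set $\phi(0) = 0_R$. It then remains to check that $\phi$ is a semiring homomorphism. Preservation of $0$, $1$, and additive inverses holds by construction, and the addition and multiplication axioms split into cases depending on whether each argument lies in $\Z$ or equals the new $0$: the cases in which both arguments lie in $\Z$ are handled by $\iota$ being a ring homomorphism, while any case involving the new $0$ collapses via the absorption and identity properties of $0_R$ in $R$, matching the corresponding properties of the new $0$ in $\Z_0$.

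The main conceptual obstacle is keeping straight the two distinct zeros of $\Z_0$: the new $0$ maps to $0_R$ while the old $\widehat{0} \in \Z$ maps to $0_{1_R}$, and these two targets are generally different elements of $R$. Once this distinction is accepted the verification becomes routine, since by construction addition and multiplication in $\Z_0$ agree with those of $\Z$ on $\Z$-elements and behave trivially with respect to the newly adjoined $0$.
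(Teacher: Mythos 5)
Your proof is correct and follows the same overall strategy as the paper: verify the universal property directly, with uniqueness forced by preservation of $0$, $1$, addition and (automatically) additive inverses, and with the crucial bookkeeping point that the old zero $\widehat{0}$ must land on $0_{1_R}$ while the adjoined zero lands on $0_R$. The one genuine difference is in the existence step: the paper simply writes down the explicit formula for $f(n)$ and asserts that checking it is a homomorphism is easy, whereas you obtain the restriction to $\Z$ from the initiality of $\Z$ in $\Ring$ applied to the ring of scalars $\G(0_{1_R})$, and then only have to handle the cases involving the adjoined zero by absorption. This is a nice refinement --- it outsources the bulk of the verification (that integer combinations of $1_R$ add and multiply correctly) to a known fact about rings, at the mild cost of noting that the set inclusion $\G(0_{1_R}) \hookrightarrow R$ is merely a function (not a semiring homomorphism, since it does not send $0_{1_R}$ to $0_R$), which is all you actually use.
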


\begin{proof}
Let $R$ be an arbitrary inverse semiring. Any map $f\colon \Z_0 \to R$ must send the unit $1$ of $\Z_0$ to the unit of $R$, must preserve addition and inverses, and must send $0$ to $0$. This completely determines the map and we find
\begin{align*}
f(n) = \begin{cases}
1+1+\dots+1 \text{\ ($n$ times)}, & \text{for $n > 0$}, \\
-1-1-\dots-1 \text{\ ($-n$ times)}, & \text{for $n < 0$}, \\
0_1, & \text{for $n = \widehat{0}$}, \\
0, & \text{for $n = 0$,}
\end{cases}
\end{align*}
where $<$ refers to the usual order on $\Z$.
(Recall that in $\Z_0$, the element $\widehat{0}$ is the zero in the original ring $\Z$, while $0$ is the newly adjoined additive identity.)

Moreover, it is easy to see that the function so-defined is indeed a semiring homomorphism.
Hence, it is the unique homomorphism from $\Z_0$ to $R$.
\end{proof}

Note then that a commutative inverse semiring is the same thing as a commutative $\Z_0$-algebra. It is known that the free commutative $R$-algebra over a semiring $R$ on a set of variables is simply given by multivariate polynomials in those variables with coefficients in $R$. Thus, we arrive at the following result.

\begin{proposition}
The free (commutative) inverse semiring on $n$ generators is $\Z_0[x_1,\dots,x_n]$.
\end{proposition}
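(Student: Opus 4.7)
The plan is to reduce the claim to the standard fact that, for a commutative semiring $R$, the free commutative $R$-algebra on $n$ generators is the polynomial semiring $R[x_1,\dots,x_n]$. The bridge is the preceding proposition together with the remark after the definition of an $R$-algebra: since $\Z_0$ is initial in $\InvCRig$, each commutative inverse semiring $A$ carries a unique morphism from $\Z_0$, and the remark tells us that specifying a commutative inverse semiring $A$ is the same data as specifying a commutative $\Z_0$-algebra. This identification is natural in $A$, so the forgetful functors to $\Set$ agree, and a free object in $\InvCRig$ on $n$ generators is the same as a free commutative $\Z_0$-algebra on $n$ generators.

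Given that identification, I would simply invoke the standard free $R$-algebra construction (for instance as in \cite{Gol99}) with $R = \Z_0$ to conclude that the free object is $\Z_0[x_1,\dots,x_n]$ equipped with the inclusion of the variables. Concretely, for any commutative inverse semiring $A$ and any choice of elements $a_1,\dots,a_n \in A$, the map $x_i \mapsto a_i$ extends to a unique $\Z_0$-algebra homomorphism $\Z_0[x_1,\dots,x_n]\to A$ by evaluation, which is then a semiring homomorphism of inverse semirings under the identification above.

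The only genuine verification is that the polynomial semiring $\Z_0[x_1,\dots,x_n]$ formed over $\Z_0$ actually lies in $\InvCRig$, i.e.\ that additive inverses exist. This is immediate: polynomials are finite formal sums of monomials whose coefficients are computed pointwise in $\Z_0$, and $\Z_0$ is an inverse semiring, so the negative of a polynomial is obtained by negating each coefficient. Hence the universal property realised in $\CRig$ automatically restricts to the universal property in $\InvCRig$, which is the content of the proposition.
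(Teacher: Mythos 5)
Your proposal is correct and follows essentially the same route as the paper: identify commutative inverse semirings with commutative $\Z_0$-algebras via the initiality of $\Z_0$, then invoke the standard fact that the free commutative $R$-algebra on $n$ variables is $R[x_1,\dots,x_n]$. Your extra check that $\Z_0[x_1,\dots,x_n]$ has additive inverses is harmless but already follows from \cref{Mon=>InvMon}, since any module over an inverse semiring is automatically an inverse monoid.
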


Composing this free functor with the reflections from \cref{sec:fundamental}, we expect that the ring reflection of the free inverse semiring on $n$ generators should be the free commutative ring $\Z[x_1,\dots,x_n]$ and the idempotent semiring of idempotents should be the free idempotent semiring $P_{\mathrm{fin}}(\N^n)$, where $P_{\mathrm{fin}}$ is the finite powerset (since the composition of adjoints is the adjoint of the composite). This can readily be verified explicitly --- the ring reflection simply identifies $0$ to $\overline{0}$ in all coefficients, while the idempotent associated to a polynomial corresponds to the set of exponents for the terms that have (truly) nonzero coefficients.
Similar results, of course, hold for the algebras over any inverse semiring, and in particular, for algebras over $\R_0$.

\begin{remark}
We can also recover the inverse semiring $\R[x]_B$ of polynomials from \cref{ex:polynomials}, which has a more impoverished semilattice of idempotents, by further quotienting $\R_0[x]$ by the relation $0_1 \le 0_x$ (i.e.\ $0_1 + 0_x = 0_x$). This destroys information about precisely which coefficients are used and only remembers the exponent of the largest power of $x$ -- that is, the degree.
Thus, this inverse semiring satisfies the universal property that for a (commutative) $\R_0$-algebra $A$ and any $a \in A$ such that $0_1 \le 0_a$, there is a unique map from $\R[x]_B$ to $A$ sending $x$ to $a$.

For multivariate polynomials, we may similarly quotient $\R_0[x_1,\dots,x_n]$ by $0_1 \le 0_{x_i}$ for $1 \le i \le n$ so that the idempotents can be interpreted as bounds on the multi-degrees of the polynomials. Further quotienting by $0_{x_i} = 0_{x_j}$, we obtain an inverse semiring whose idempotents correspond to the total degree.
\end{remark}

\begin{remark}\label{rem:zeroless_polys}
The situation is perhaps even clearer if we work with zeroless inverse semirings.
Since the left adjoint from zeroless inverse semirings to inverse semirings simply adjoins a zero, the free zeroless inverse semirings are as above, but with the zero removed.
For the zeroless version of bounded polynomials, we remove the zero $(0,-\infty)$, leaving $(0,0)$ as the new non-absorptive zero. These zeroless bounded polynomials are obtained from the free ones by just adding an equation that forces $0_1$ to be the smallest idempotent: $0_1 \le 0_a$ for all $a \in \Z_0[x_1,\dots,x_n] \setminus \{0\}$. Thus, they satisfy a slightly simpler universal property that does not make reference to the $x_i$'s explicitly: \emph{bounded polynomials are the free zeroless semirings for which $0_1$ is the smallest idempotent}.
\end{remark}

\begin{remark}
The adjunctions from \cref{sec:fundamental} can also be understood (in the commutative case) in terms of change-of-semiring functors for semiring algebras, which arise from the maps $\Z_0 \twoheadrightarrow \Z$ in the case of rings and $\Z_0 \twoheadrightarrow 2$ in the case of idempotent semirings. Recall that any homomorphism $f\colon R \to S$ between commutative semirings induces an adjunction between the categories of (commutative) $R$-algebras and $S$-algebras where the right adjoint views an $S$-algebra as an $R$-algebra by `restriction of scalars', while the left adjoint takes the tensor product of an $R$-algebra with $S$ over $R$.
Hence, the ring reflection corresponds to tensoring with the ring $\Z$ and the idempotent reflection from tensoring with the lattice $2$.
\end{remark}

\section{Modules and ideals}

In this section, we explore the theory of modules over an inverse semiring.
What should such a module be? Mimicking the approach for rings, we might say a module over an inverse semiring $R$ is a commutative inverse monoid $M$ equipped with a homomorphism from $R$ to $\End(M)$ (see \cref{endInvSR}). Alternatively, since inverse semiring is a special kind of semiring, we could use the notion of a semiring-module. The difference between these is that in the second case, the underlying monoid of $M$ is not assumed to be inverse. However, the following result shows that these two approaches actually agree.

Throughout this section, the terms `module' and `ideal' can be understood to mean `left module' and `left ideal', though the results for the right-sided versions are essentially the same.

\begin{proposition}\label{Mon=>InvMon}
Let $R$ be a general semiring and consider an $R$-module with underlying commutative monoid $M$. If $R$ is an inverse semiring, then $M$ is an inverse monoid.
\end{proposition}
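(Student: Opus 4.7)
The plan is to produce an inverse for each element of $M$ by transporting the additive inverse of $R$ through the action. Concretely, for $x \in M$ I would set $x' := (-1)\cdot x$, where $-1$ is the additive inverse of $1$ in the inverse semiring $R$. The claim is then that $x'$ plays the role of the inverse of $x$ in the commutative monoid $M$.

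To verify this, I would apply distributivity of the action over addition together with $1\cdot x = x$. In $R$, the inverse-monoid identities read $1 + (-1) + 1 = 1$ and $(-1) + 1 + (-1) = -1$. Combining these with the module axioms gives
\[ x + x' + x \;=\; (1 + (-1) + 1)\cdot x \;=\; 1\cdot x \;=\; x, \]
and symmetrically $x' + x + x' = x'$. So every element of $M$ admits a pseudo-inverse in the sense of \cref{prop:commuting_idemponents}.

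Finally, I would invoke \cref{prop:commuting_idemponents} (or rather \cref{rem:commuting_idemponents}): since $M$ is commutative by hypothesis, its idempotents trivially commute, so the pseudo-inverse is in fact a unique inverse and $M$ is an inverse monoid. There is no real obstacle in this argument; the only subtlety worth flagging is that the inverse structure on $M$ is genuinely induced by $R$ (via the action of $-1$) and not assumed a priori, and that commutativity of $M$ is what lets us skip any separate uniqueness verification.
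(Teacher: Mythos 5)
Your argument is correct and is essentially the paper's own proof: both define the candidate inverse as $(-1)\cdot x$, verify the two inverse-monoid identities via distributivity of the action and $1 + (-1) + 1 = 1$, $(-1) + 1 + (-1) = -1$ in $R$, and appeal to \cref{rem:commuting_idemponents} to dispense with uniqueness since $M$ is commutative. Nothing is missing.
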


\begin{proof}
Let $m\in M$. We want to prove that $m$ has an inverse. Consider $-1 \cdot m$. It is easy to see that
$m + -1 \cdot m + m = (1-1+1) \cdot m = 1 \cdot m = m$,
while
$-1\cdot m + m + -1\cdot m=(-1+1-1)\cdot m = -1 \cdot  m$.
Thus, $-1\cdot m$ is an inverse to $m$, and consequently, $M$ is an inverse monoid by \cref{rem:commuting_idemponents}.
\end{proof}

\begin{example}\label{Module->Ideal}
As is the case with rings, an inverse semiring $R$ is a module over itself and ideals of $R$ can be identified with submodules of this module.
\end{example}

\subsection{Subtractive submodules}

An important class of ideals in semirings is that of \emph{subtractive ideals}, with two-sided subtractive ideals corresponding to kernels of semiring homomorphisms.
The following definition is a generalisation from ideals (i.e.\ submodules of a semiring) to arbitrary submodules recalled here for the reader's convenience (see also \cite[Chapter 14]{Gol99}).

\begin{definition}
Let $R$ be a semiring.
A submodule $S$ of an $R$-module $M$ is called \emph{subtractive} if whenever $x \in S$, $y \in M$ and $x+y\in S$, we have that $y \in S$.

Note that for a ring $R$ every sub-$R$-module is subtractive.
\end{definition}

The aforementioned kernel result also holds for submodules. This is proved in \cite[Proposition 15.18]{Gol99}, but we prove it again here for completeness. This result involves homomorphisms of inverse semiring modules, which are defined as expected and agree with the usual notion of homomorphism for modules over a semiring. The following lemma makes the relation between ideals and congruences explicit (see \cite[Example 15.3]{Gol99} and compare the case of inverse semigroups \cite[Theorem 2.4.1]{lawson1998inverse}).
\begin{lemma}\label{lem:ideal_congruence}
Let $R$ be a semiring, $M$ an $R$-module, and $S$ a submodule of $M$. Then the congruence generated by $s \sim 0$ for $s \in S$ is given by \[x \sim y \iff \exists s,s' \in S.\ x + s = y + s'.\]
The quotient of $M$ by this congruence is denoted by $M/S$.
\end{lemma}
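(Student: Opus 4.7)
The plan is to define the relation $\sim$ by the displayed formula and establish three facts: (a) $\sim$ is an $R$-module congruence on $M$, (b) it identifies every $s \in S$ with $0$, and (c) it is contained in every congruence on $M$ with this latter property. Together these show that $\sim$ is precisely the congruence generated by the pairs $(s, 0)$ for $s \in S$.

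For (a), reflexivity uses that $0 \in S$ (since $S$ is a submodule), symmetry is by swapping the roles of $s$ and $s'$, and transitivity follows from closure of $S$ under addition: if $x + s = y + s'$ and $y + t = z + t'$, then $x + (s+t) = z + (s'+t')$ provides a witness. Compatibility with addition is similar, combining the witnesses componentwise. Compatibility with scalar multiplication by $r \in R$ uses that $S$ is closed under the $R$-action, so $rs, rs' \in S$ whenever $s, s' \in S$.

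Part (b) is immediate: the equation $s + 0 = 0 + s$ witnesses $s \sim 0$, with $s, 0 \in S$. For (c), suppose $\equiv$ is any $R$-module congruence collapsing every element of $S$ to $0$. If $x \sim y$ with $x + s = y + s'$, then using $s \equiv 0$, $s' \equiv 0$, and compatibility of $\equiv$ with addition, we obtain $x \equiv x + s = y + s' \equiv y$.

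The argument is essentially a routine adaptation of the familiar construction of quotients of modules (or abelian groups) by submodules, and I do not anticipate any significant obstacle. The one point worth flagging is that since $R$ is only assumed to be a semiring and $M$ only a commutative monoid, we do not need to verify compatibility with any unary inverse operation; closure of $S$ under addition and the $R$-action is all that is required. Reflexivity depending on $0 \in S$ is also worth noting, as it uses the submodule assumption in an essential way.
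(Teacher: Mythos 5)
Your proof is correct and follows essentially the same route as the paper's: both define the relation by the displayed formula, verify it is a congruence (with transitivity and compatibility resting on closure of $S$ under addition and the $R$-action), check it collapses $S$ to $0$, and show it is contained in any congruence doing so. No substantive differences.
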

\begin{proof}
Let $\sim$ denote the congruence corresponding to the quotient $M \twoheadrightarrow M/S$ and the $\approx$ denote the relation defined by $x \approx y \iff \exists s,s' \in S.\ x + s = y + s'$. We show that they agree.
    
First note that if $x + s = y + s'$, then in $M/S$ we have $[x] = [x + 0] = [x + s] = [y + s'] = [y + 0] = [y]$, and hence ${\approx} \subseteq {\sim}$.
Also observe that we have $s \approx 0$ for all $s \in S$. Thus, it suffices to show that $\approx$ is a congruence.
    
It is clearly reflexive and symmetric. Closure under the operations is clear since $S$ is a submodule. To see that it is transitive, suppose $x + s = y + s'$ and $y + t = z + t'$ with $s,s',t,t' \in S$. Then \[x+s+t = y + s' + t = y + t + s' = z + t' + s'.\] Since $S$ is closed under addition, $s+t, s'+t' \in S$ and so the result follows.
\end{proof}

\begin{theorem}\label{Ker<=>SubtSubM}
Let $R$ be a semiring. For any $R$-module $M$, a submodule $S$ of $M$ is subtractive exactly when it arises as the kernel of some homomorphism $h\colon M \to N$ of $R$-modules --- that is, as the preimage of $0$ under $h$.
\end{theorem}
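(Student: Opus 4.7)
The plan is to prove both directions, with the forward direction relying on \cref{lem:ideal_congruence} and the backward direction following from a direct computation.

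For the \emph{backward direction}, suppose $S = h^{-1}(0)$ for some $R$-module homomorphism $h\colon M \to N$. Take $x \in S$ and $y \in M$ with $x + y \in S$. Then $h(x) = 0$ and $h(x+y) = 0$, and since $h$ preserves addition we compute $h(y) = 0 + h(y) = h(x) + h(y) = h(x+y) = 0$, so $y \in S$.

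For the \emph{forward direction}, suppose $S$ is subtractive. I would take $N = M/S$ and let $h\colon M \twoheadrightarrow M/S$ be the quotient map from \cref{lem:ideal_congruence}. It remains to identify $h^{-1}(0)$ with $S$. One inclusion is immediate: if $x \in S$ then $x \sim 0$ (take $s = 0$ and $s' = x$ in the description of the congruence), so $h(x) = 0$. Conversely, if $h(x) = 0$ then by the lemma there exist $s, s' \in S$ with $x + s = 0 + s' = s'$. Since $s \in S$ and $s + x = s' \in S$, subtractivity of $S$ forces $x \in S$.

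There is no real obstacle here; the only delicate point is realising that the converse inclusion $h^{-1}(0) \subseteq S$ is \emph{exactly} what subtractivity is designed to deliver once one has the explicit description of the congruence generated by $S$ from \cref{lem:ideal_congruence}. This also explains why for rings every submodule is automatically subtractive: in that setting the witnesses $s, s'$ can be cancelled freely, whereas in the semiring setting the subtractive hypothesis is precisely what allows one to recover $x$ from the equation $s + x = s'$.
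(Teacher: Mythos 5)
Your proof is correct and follows essentially the same route as the paper's: the backward direction is the same direct computation with $h$, and the forward direction uses the quotient $M \twoheadrightarrow M/S$ together with the explicit description of the congruence from \cref{lem:ideal_congruence}, applying subtractivity to the equation $x + s = s'$. The only difference is cosmetic --- you spell out the inclusion $S \subseteq h^{-1}(0)$ explicitly, which the paper dispatches with ``by construction''.
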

\begin{proof}
We prove the forward and reverse implications in turn. Firstly, assume that $h\colon M\to N$ is a homomorphism of the $R$-modules. Since we already know $\Ker(h)$ is a submodule of $M$, we only need to verify subtractivity. Assume that $a\in\Ker(h)$ and let $b\in M$ such that $a+b\in\Ker(h)$. Then
\[h(b)=0+h(b)=h(a)+h(b)=h(a+b)=0,\]
and so $b\in\Ker(h)$ and $\Ker(h)$ is subtractive.

For the other direction, let $S$ be a subtractive submodule of $M$. Consider the quotient map
$q\colon M \twoheadrightarrow M/S$. This quotient sends elements of $S$ to $0$ by construction. We must show that there are no other such elements.
If $[m]=0$ then by \cref{lem:ideal_congruence} we have $s,s'\in S$ such that $m+s=s'$.
Thus, $m\in S$ by subtractivity of $S$. This completes the proof.
\end{proof}

Our first result distinguishing subtractive modules in inverse semirings from subtractive submodules in general semirings relates them to a property of the canonical order from \cref{InvSR preorder}.

\begin{proposition}\label{DCMod<=>SubtrMod}
Let $R$ be an inverse semiring.
Any submodule $S$ of an $R$-module $M$ is subtractive if and only if it is downward closed with respect to the canonical order on (the underlying additive monoid of) $M$.
\end{proposition}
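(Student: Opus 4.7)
The plan is to exploit Proposition \ref{prp:preorderadd} and Definition \ref{InvSR preorder} together with one crucial observation: every submodule of an $R$-module is automatically closed under the unary operation of additive inversion. This holds because the earlier result $-x = (-1) \cdot x$ shows that taking inverses is just scalar multiplication by $-1$, so $x \in S$ implies $-x \in S$, and hence $0_x = x + (-x) \in S$. This single observation is what bridges the gap between subtractivity (a purely additive semiring condition) and downward closure in the canonical order.

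For the forward direction, I would suppose $S$ is subtractive and take $x \le y$ with $y \in S$. By Proposition \ref{prp:preorderadd}, $x + 0_y = y$. Since $y \in S$, the observation above gives $0_y \in S$. So we have $0_y \in S$ and $0_y + x = y \in S$; subtractivity then forces $x \in S$.

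For the reverse direction, I would suppose $S$ is downward closed, fix $a \in S$ and $b \in M$ with $a + b \in S$, and aim to show $b \in S$. The closure of $S$ under $-1$-scaling gives $-a \in S$, and then $b' := -a + (a+b) = 0_a + b$ lies in $S$. Since $0_a$ is an additive idempotent and $b + 0_a = b'$, Definition \ref{InvSR preorder} yields $b \le b'$ directly. Downward closure then delivers $b \in S$.

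No step looks like a serious obstacle here; the only subtle point is recognizing early on that $-1$-multiplication is an internal operation on any submodule, which is what lets us manufacture the idempotent $0_a$ (or $0_y$) inside $S$ on demand. Once that is in hand, both directions reduce to a two-line manipulation using the standard order characterizations.
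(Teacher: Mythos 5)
Your proof is correct and follows essentially the same route as the paper's: both directions hinge on the observation that $-a = (-1)\cdot a$ lies in any submodule containing $a$, so that $0_a \in S$, combined with \cref{prp:preorderadd} and the definition of the canonical order. No differences worth noting.
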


\begin{proof}
$(\Leftarrow)$ Let $S$ be downward closed and consider elements $a\in S$ and $b\in M$ such that $a+b\in S$.
Since $a\in S$ we know by \cref{Mon=>InvMon} that $-a=-1 \cdot a \in S$. Then $-a+a+b=0_a+b\in S$ and since $b\le 0_a+b$ by definition of the order, we know that $b\in S$. Thus, $S$ is subtractive.

$(\Rightarrow)$ Let $S$ be subtractive and consider an element $a\in S$.
Consider $b\in M$ such that $b\le a$. By \cref{prp:preorderadd}, we have $b+0_a=a$. But $0_a \in S$ and so $b \in S$ by subtractivity. Thus, $S$ is downward-closed, completing the proof. 
\end{proof}

\begin{corollary}\label{M/S=M/da(S)}
Let $R$ be an inverse semiring, $M$ an $R$-module, and $S$ a submodule of $M$.
Then the kernel of the quotient map $M \twoheadrightarrow M/S$ is given by $\da S$.
\end{corollary}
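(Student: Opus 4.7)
The plan is to prove the two inclusions $\da S \subseteq \Ker(q)$ and $\Ker(q) \subseteq \da S$ separately, where $q\colon M \twoheadrightarrow M/S$ is the quotient map.

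For the first inclusion, I would apply \cref{Ker<=>SubtSubM} to conclude that $\Ker(q)$ is a subtractive submodule of $M$, and then use \cref{DCMod<=>SubtrMod} to upgrade this to downward-closedness. Since $\Ker(q)$ clearly contains $S$ (every element of $S$ is sent to $0$ by $q$, by construction of the defining congruence), it must contain the downward closure $\da S$.

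For the reverse inclusion, I plan to unpack the congruence using \cref{lem:ideal_congruence}: if $m \in \Ker(q)$, then $m + s_1 = s_2$ for some $s_1, s_2 \in S$. Adding $-s_1$ to both sides (noting that $-s_1 = -1 \cdot s_1 \in S$ because $S$ is a submodule) gives $m + 0_{s_1} = s_2 - s_1$, where the right-hand side lies in $S$ since $S$ is closed under addition and negation. Because $0_{s_1}$ is idempotent, this says precisely that $m \le s_2 - s_1$ with $s_2 - s_1 \in S$, so $m \in \da S$.

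The main conceptual point, and the only place where the proof uses something nontrivial beyond the preceding results, is this passage from the abstract congruence-theoretic description of $\Ker(q)$ in \cref{lem:ideal_congruence} to the order-theoretic description via $\da S$; the essential input is that the submodule $S$ is closed under negation (a consequence of \cref{Mon=>InvMon} and the stability of $S$ under scalar multiplication), which is what allows us to convert an arbitrary equation $m + s_1 = s_2$ into the witnessing inequality $m \le s_2 - s_1$. Both containments are short once this observation is in place, so I do not anticipate any real obstacle.
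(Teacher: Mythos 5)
Your proof is correct and follows what the paper intends: the corollary is stated without proof, and the natural argument is exactly your combination of \cref{Ker<=>SubtSubM} and \cref{DCMod<=>SubtrMod} for one inclusion and unwinding the congruence of \cref{lem:ideal_congruence} (using that $S$ is closed under $-1 \cdot (-)$, hence under negation) for the other. No gaps.
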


\begin{remark}
The quotient corresponding to the ring reflection of an inverse semiring $R$ from \cref{prop:reflections} can be described as $R/E(R)$ (since it identifies each $0_x$ with $0$).
Of course, this is the same as the quotient $R/\da E(R)$ and so the elements that get sent to $0$ are precisely the elements that compare less than some idempotent.
We will consider the case where $E(R)$ is already downwards closed in \cref{sec:concluding}.
\end{remark}

\subsection{Lattices of submodules}

It is a well-known result that the submodules of a module over a ring form a modular lattice --- that is, a lattice satisfying $(s_1 \vee x) \wedge s_2 = s_1 \vee (x \wedge s_2)$ whenever $s_1 \le s_2$. The same cannot be said for modules over more general semirings.
The issue already arises for ideals of commutative idempotent semirings, as can be seen in the following example from \cite[\S 12]{ward1939}.
\begin{example}\label{ex:nonmodular}
Let $\mathfrak{B}_1 = \{1, j, a, b, m, 0\}$ with the following order structure.
\begin{center}
\begin{tikzpicture}[auto]
  \matrix (nodes) [matrix of nodes, nodes in empty cells, ampersand replacement=\&, column sep=0.45cm, row sep=0.3cm]{
    \& $1$ \& \\[0.2cm]
    \& $j$ \& \\
    \& \& $b$ \\
    $a$ \& \& \\
    \& \& $m$ \\
    \& $0$ \& \\
  };
  \draw [thick] (nodes-1-2) -- (nodes-2-2);
  \draw [thick] (nodes-2-2) -- (nodes-3-3);
  \draw [thick] (nodes-2-2) -- (nodes-4-1);
  \draw [thick] (nodes-3-3) -- (nodes-5-3);
  \draw [thick] (nodes-4-1) -- (nodes-6-2);
  \draw [thick] (nodes-5-3) -- (nodes-6-2);
\end{tikzpicture}
\end{center}
Let addition in $\mathfrak{B}_1$ be given by join and multiplication be given as in the following table.
\begin{table}[H]
\centering
\begin{tabular}{c|cccccc}
$\boldsymbol{\cdot}$ & 1 & $j$ & $a$ & $b$ & $m$ & 0 \\
\hline
1 & 1 & $j$ & $a$ & $b$ & $m$ & 0 \\
$j$ & $j$ & $a$ & $a$ & 0 & 0 & 0 \\
$a$ & $a$ & $a$ & $a$ & 0 & 0 & 0 \\
$b$ & $b$ & 0 & 0 & 0 & 0 & 0 \\
$m$ & $m$ & 0 & 0 & 0 & 0 & 0 \\
0 & 0 & 0 & 0 & 0 & 0 & 0 \\
\end{tabular}
\end{table}

The modular law for ideals of $\mathfrak{B}_1$ fails when $s_1 = (a)$, $s_2 = (j)$ and $x = (m)$.
\end{example}

A similar failure of modularity applies to subgroups of a noncommutative group, but in this case the equality does hold when $s_1$ is restricted to be \emph{normal} subgroup (i.e.\ the kernel of homomorphism).
In the setting of semiring modules, a different relaxation holds, where it is instead $s_2$ that is restricted to be a kernel. In lattice-theoretic terms, subtractive submodules are \emph{right modular} elements in the lattice of submodules. Note that this result is true for general semirings.

\begin{theorem}
For any semiring $R$ and $R$-module $M$, the lattice of submodules of $M$ satisfies the following restricted modular law:
\[(S_1+X)\cap S_2=S_1+(X\cap S_2)\]
for all $X \le M$ and all $S_1 \le S_2 \le M$ where $S_2$ is \emph{subtractive}.
\end{theorem}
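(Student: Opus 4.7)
The plan is to prove the two inclusions of the desired equality separately. The inclusion $S_1 + (X \cap S_2) \subseteq (S_1 + X) \cap S_2$ is just a general monotonicity statement that holds in the lattice of submodules of any module over any semiring, and does not require subtractivity. Concretely, I would note that $X \cap S_2 \subseteq X$ yields $S_1 + (X \cap S_2) \subseteq S_1 + X$, and that $S_1 \subseteq S_2$ together with $X \cap S_2 \subseteq S_2$ yields $S_1 + (X \cap S_2) \subseteq S_2 + S_2 = S_2$. Intersecting these gives the inclusion.

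For the reverse inclusion $(S_1 + X) \cap S_2 \subseteq S_1 + (X \cap S_2)$, I would pick an arbitrary $y \in (S_1 + X) \cap S_2$ and write $y = s_1 + x$ with $s_1 \in S_1$ and $x \in X$. Since $S_1 \subseteq S_2$ we have $s_1 \in S_2$, and by hypothesis $y = s_1 + x \in S_2$, so subtractivity of $S_2$ forces $x \in S_2$. Hence $x \in X \cap S_2$, and $y = s_1 + x \in S_1 + (X \cap S_2)$, as required.

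I do not anticipate any real obstacle; the only non-trivial step is invoking subtractivity of $S_2$ to conclude $x \in S_2$ from $s_1, s_1 + x \in S_2$, which is exactly the definition of subtractivity. Notably, the argument uses neither the inverse semiring structure nor any finer module theory, which is consistent with the theorem being stated for an arbitrary semiring $R$.
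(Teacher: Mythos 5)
Your proof is correct and follows essentially the same route as the paper: the containment $S_1+(X\cap S_2)\subseteq (S_1+X)\cap S_2$ is the trivial lattice inequality, and the reverse containment is obtained by writing an element of $(S_1+X)\cap S_2$ as $s_1+x$ and applying subtractivity of $S_2$ to conclude $x\in S_2$. No gaps.
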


\begin{proof}
The $\ge$ direction holds in any lattice. We show that $(S_1+X)\cap S_2 \subseteq S_1+(X\cap S_2)$.
Let $k\in (S_1+X)\cap S_2$. Then we can write $k$ as $s_1+x$ for some $s_1\in S_1$ and $x\in X$. Since $s_1+x\in S_2$ and $s_1\in S_1 \subseteq S_2$, we have that $x\in S_2$ by subtractivity of $S_2$. Thus, $x \in X \cap S_2$, and so $k\in S_1 + (X\cap S_2)$.
\end{proof}

We might hope that this would imply the lattice of subtractive submodules is modular, but this is not necessarily the case, as subtractive submodules are not closed under joins in the lattice of all submodules.
The join in the lattice of subtractive submodules is given by taking the `subtractive closure' of the join in the lattice of submodules (see \cite{SA92}). By \cref{DCMod<=>SubtrMod} the subtractive closure for modules over an inverse semiring is simply the downward closure.

The subtractive ideals of \cref{ex:nonmodular} are precisely the principal downsets and hence fail to form a modular lattice. (Consider $s_1 = \da m$, $s_2 = \da b$ and $x = \da a$.)

\subsection{Upward-closed submodules}\label{sec:upwards}

So far we have studied downward-closed submodules. The upward-closed submodules are also worth consideration. In fact, the quotient of an $R$-module $M$ by a submodule $S$ is an abelian group if and only if $\da S$ is upward-closed. To prove this we will need the following result.

\begin{proposition}
Let $M$ be a module over an inverse semiring $R$. A submodule $S$ of $M$ is upward-closed if and only if $E(M) \subseteq S$.
\end{proposition}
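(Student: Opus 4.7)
The plan is to prove both implications directly, using the order-theoretic tools from Section 2, particularly \cref{prp:upclosed} which characterises the elements above $0$ as exactly the idempotents.

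For the forward direction ($\Rightarrow$), assume $S$ is upward-closed and pick any $e \in E(M)$. Since $S$ is a submodule it contains $0$, and \cref{prp:upclosed} (applied to $M$, viewed as a commutative inverse monoid via \cref{Mon=>InvMon}) gives $0 \le e$. Upward-closedness then forces $e \in S$, so $E(M) \subseteq S$.

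For the reverse direction ($\Leftarrow$), suppose $E(M) \subseteq S$ and take $x \in S$ with $x \le y$ in $M$. By the definition of the canonical preorder, there is some idempotent $z \in E(M)$ with $x + z = y$. But $z \in S$ by hypothesis and $x \in S$, so $y = x + z \in S$ because $S$ is closed under addition. Hence $S$ is upward-closed.

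There is no real obstacle here --- both directions are one-liners once one invokes \cref{prp:upclosed} and \cref{Mon=>InvMon}. The only point worth flagging is that \cref{prp:upclosed} was stated for inverse monoids, so the proof should explicitly note that the underlying additive structure of an $R$-module for $R$ an inverse semiring is itself an inverse monoid, which is exactly \cref{Mon=>InvMon}.
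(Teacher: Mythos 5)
Your proof is correct and follows essentially the same route as the paper: the forward direction uses $0 \in S$ together with \cref{prp:upclosed}, and the reverse direction writes $y = x + z$ for an idempotent $z \in E(M) \subseteq S$ (the paper uses $z = 0_y$ via \cref{prp:preorderadd}, but this is an immaterial difference). Your remark about justifying the use of \cref{prp:upclosed} via \cref{Mon=>InvMon} is a reasonable point of care that the paper leaves implicit.
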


\begin{proof}
Suppose that $S$ is upward-closed. Of course, we have that $0 \in S$. But by \cref{prp:upclosed}, every idempotent is greater than or equal to 0. Hence, $E(M) \subseteq S$.

Now suppose that $E(M) \subseteq S$ and consider $x \in S$, $y\in M$ such that $x \le y$. Then $y = x + 0_y$. But $0_y \in E(M)\subseteq S$ and so $y \in S$.
\end{proof}

\begin{proposition}
Let $R$ be an inverse semiring and $M$ an $R$-module. Then $M / E(M) \cong M/\da E(M)$ is an abelian group. Consequently, $M/S$ is an abelian group for any upward-closed submodule $S$ of $M$. 
\end{proposition}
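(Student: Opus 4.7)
The plan is to derive both assertions from a single observation: \emph{for any submodule $S \subseteq M$ with $E(M) \subseteq S$, the quotient $M/S$ is an abelian group.}

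First I would check that $E(M)$ is itself a submodule of $M$. It contains $0$, is closed under addition by the module analogue of \cref{prp:sum_of_idempotents}, and is closed under scalar multiplication since if $e + e = e$ then $re + re = r(e + e) = re$, so $re \in E(M)$. Granted this, \cref{M/S=M/da(S)} gives the isomorphism $M/E(M) \cong M/\da E(M)$, and since $E(M) \subseteq \da E(M)$, the downward closure $\da E(M)$ is upward-closed by the preceding proposition. Thus both halves of the statement to be proved fall under the setup of the observation above.

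To establish the observation, let $S$ be a submodule of $M$ with $E(M) \subseteq S$. Then $M/S$ is an $R$-module, and hence an inverse monoid by \cref{Mon=>InvMon}. A commutative inverse monoid is an abelian group precisely when $0_x = 0$ for every element $x$, so it suffices to show that $0_{[m]} = [0]$ for all $[m] \in M/S$. Now the additive inverse of $[m]$ in the quotient is just $[-m]$ (this is automatic from the $R$-module structure, as in the proof of \cref{Mon=>InvMon}), so $0_{[m]} = [m] + [-m] = [0_m]$. But $0_m \in E(M) \subseteq S$, so applying \cref{lem:ideal_congruence} with $s = 0$ and $s' = 0_m$ gives $0_m + 0 = 0 + 0_m$, hence $0_m \sim 0$ and $0_{[m]} = [0]$ as required. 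Commutativity of the abelian group is inherited from $M$.

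There is no serious obstacle in this argument. The only point worth attention is the verification that $E(M)$ is a submodule of $M$; this is routine, but is what makes the symbol $M/E(M)$ meaningful in the first place. Everything else is a short manipulation of the congruence from \cref{lem:ideal_congruence} combined with the characterisation of upward-closed submodules given just before.
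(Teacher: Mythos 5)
Your proposal is correct and follows essentially the same route as the paper: the key point in both is that every idempotent $0_m$ lies in $S$ and hence in the congruence class of $0$, so $[m]-[m]=[0_m]=[0]$ and the quotient inverse monoid is a group. The only cosmetic difference is that you prove the general claim for any submodule containing $E(M)$ directly, whereas the paper first treats $M/E(M)$ and then observes that $M/S$ is a further quotient of it.
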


\begin{proof}
Note that all idempotents belong to the equivalence class of $0$. Hence $[x - x] = 0$ and so $M/\da E(M)$ is a group.
Any upward-closed submodule $S$ of $M$ contains $E(M)$. So $M/S$ is a further quotient of $M/E(M)$ and thus also an abelian group.
\end{proof}

\begin{proposition}\label{Thm: CharUC}
Let $R$ be an inverse semiring, $M$ an $R$-module, and $S$ a subtractive submodule of $M$. The following are equivalent:
\begin{enumerate}
\item $S$ is upward-closed,
\item $E(M)\subseteq S$,
\item $M/S$ is an abelian group.
\end{enumerate}
\end{proposition}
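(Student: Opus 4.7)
The plan is to observe that two of the three equivalences have essentially been done in the two immediately preceding propositions, and only one implication remains. Specifically, the equivalence $(1) \iff (2)$ is the first displayed proposition of this subsection, while the implication $(1) \Rightarrow (3)$ is precisely the content of the second proposition (upward-closed $\Rightarrow$ $M/S$ an abelian group). So the only new work is to close the cycle by proving, for example, $(3) \Rightarrow (2)$.

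For $(3) \Rightarrow (2)$, I would argue as follows. Suppose $M/S$ is an abelian group, and let $z \in E(M)$ be an additive idempotent of $M$. Since $z+z = z$, passing to the quotient gives $[z] + [z] = [z]$ in $M/S$. Because $M/S$ is a group, $[z]$ has an additive inverse and we may cancel to deduce $[z] = 0$. By \cref{lem:ideal_congruence}, $[z] = 0$ means there exist $s,s' \in S$ with $z + s = 0 + s' = s'$. Now $s \in S$ and $z + s = s' \in S$, so the subtractivity of $S$ forces $z \in S$. Thus $E(M) \subseteq S$, i.e.\ $(2)$ holds.

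Combining this with the earlier two propositions gives the full cycle $(1) \Rightarrow (3) \Rightarrow (2) \Rightarrow (1)$. The only place where any real work happens is the use of subtractivity at the end of the $(3) \Rightarrow (2)$ step — this is the essential ingredient without which the argument collapses, since without subtractivity we could only conclude $z$ lies in $\da S$ rather than in $S$ itself. I do not anticipate any significant obstacle beyond recognising that idempotency of $z$ plus group cancellation in $M/S$ is exactly the right lever to pull.
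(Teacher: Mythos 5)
Your proposal is correct and follows essentially the same route as the paper: both reduce to the single new implication $(3)\Rightarrow(2)$ and prove it by noting that an idempotent of $M$ must land on the unique idempotent $[0]$ of the group $M/S$, then invoke subtractivity (equivalently, that the kernel of the quotient is $\da S = S$) to conclude $E(M)\subseteq S$. Your version merely spells out the cancellation and the appeal to \cref{lem:ideal_congruence} that the paper leaves implicit.
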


\begin{proof}
We have already shown $(1)\Leftrightarrow (2)$ and $(2)\Rightarrow (3)$. We now prove $(3)\Rightarrow(2)$. If $M/S$ is a group, then it has only one idempotent $[0]$. But since homomorphisms preserve idempotents, this means that all idempotents in $M$ have to be mapped to $[0]$ and so $E(M)\subseteq \da S = S$. This completes the proof.
\end{proof}

The following specialisation of \cref{Thm: CharUC} is of particular interest since a ring is simply an inverse semiring where the underlying additive structure is a group.
\begin{corollary}
Let $I$ be a subtractive two-sided ideal of an inverse semiring $R$.
The following are equivalent:
\begin{enumerate}
\item $I$ is upward-closed,
\item $E(R)\subseteq I$,
\item $0_1 \in I$,
\item $R/I$ is a ring.
\end{enumerate}
\end{corollary}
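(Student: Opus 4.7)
The plan is to reduce most of the equivalence to the previous proposition \cref{Thm: CharUC}, applied to the $R$-module $M = R$ (i.e.\ $R$ acting on itself by left multiplication, so that submodules are precisely left ideals). This immediately yields the equivalence of (1), (2), and the statement that $R/I$ is an abelian group under addition. Since $I$ is a two-sided ideal, multiplication descends to $R/I$, making it a quotient inverse semiring in the usual way, so by \cref{prp:zeroone}(i) the additive group condition on $R/I$ is equivalent to (4), that $R/I$ is a ring.

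It then remains only to splice (3) into the chain of equivalences. The direction (2) $\Rightarrow$ (3) is immediate, as $0_1 \in E(R)$. For (3) $\Rightarrow$ (2), I would take an arbitrary $z \in E(R)$ and apply \cref{prp:absorb} to compute $0_1 \cdot z = 0_z = z$, where the last equality holds because $z$ is idempotent. Since $I$ is a two-sided ideal containing $0_1$, this element lies in $I$, and hence $E(R) \subseteq I$.

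The main conceptual point (and the only possible obstacle) is that \cref{prp:zeroone} needs to be applied to the quotient, which requires $R/I$ to inherit an inverse semiring structure from $R$; this is routine given that $I$ is a two-sided subtractive ideal, so multiplication and the unary inverse operation both descend cleanly through the congruence of \cref{lem:ideal_congruence}. Everything else is bookkeeping on top of \cref{Thm: CharUC}.
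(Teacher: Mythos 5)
Your proof is correct and follows essentially the same route as the paper, which states this corollary as an immediate specialisation of \cref{Thm: CharUC} to $M=R$ (noting that a ring is just an inverse semiring whose additive monoid is a group). Your extra details --- that the two-sidedness of $I$ makes the congruence of \cref{lem:ideal_congruence} a semiring congruence so that \cref{prp:zeroone} applies to $R/I$, and that $(3)\Rightarrow(2)$ follows from $0_1\cdot z = 0_z = z$ --- are exactly the bookkeeping the paper leaves implicit.
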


\section{Connections to inverse semigroups}\label{sec:concluding}

In this paper we have worked out the basic theory of inverse semirings and discussed the extent to which the theory of modules over inverse semirings resembles the theory of modules over rings. We have neglected up to now to consider the ways in which the theory of inverse semirings is similar to the theory of inverse semigroups.

Various concepts from inverse semigroup theory are relevant to inverse semirings, but for this section we will only focus on one of them. It is not hard to see that the ideal of idempotents $E(R)$ of an inverse semiring $R$ is subtractive precisely when its additive inverse semigroup is \emph{E-unitary}.

\begin{definition}
An inverse semiring $R$ is \emph{E-unitary} if its underlying additive semigroup is E-unitary --- that is, if $x \in R$ and $z \in E(R)$, then $x + z$ is an idempotent if and only if $x$ is an idempotent.
\end{definition}

All rings, all idempotent semirings, and the inverse semirings of bounded polynomials are E-unitary. In addition, whenever an inverse monoid $X$ is E-unitary, the associated endomorphism inverse semiring $\End(X)$ will also be E-unitary.

An important theorem of inverse semigroups states that an inverse semigroup $S$ is E-unitary if and only if it can be embedded into a semidirect product of a group and a semilattice (see \cite[Theorem 7.1.5]{lawson1998inverse}). When $S$ is commutative, as in our case, the semidirect product reduces to a product. A similar result holds for inverse semirings.

 \begin{theorem}\label{thm:e_unitary_embedding}
Let $R$ be an inverse semiring. Then $R$ is E-unitary if and only if $R$ can be embedded in the product of a ring and an idempotent semiring. In particular, the ring can be taken to be the ring reflection $R/E(R)$ and the idempotent semiring to be $E(R)$.
 \end{theorem}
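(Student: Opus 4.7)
The plan is to prove both directions of the equivalence, with the backward implication serving as a sanity check and the forward implication carrying the real content.

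For the easy direction, I would first observe that in any product $S \times T$ of a ring $S$ and an idempotent semiring $T$, the additive idempotents are precisely the elements of $\{0\} \times T$: indeed, $(s,t)+(s,t) = (s,t)$ forces $s+s = s$ in $S$, hence $s=0$, while every element of $T$ is automatically additively idempotent. Given this, a direct check shows $S \times T$ is E-unitary: if $(s,t) + (0,t')$ is idempotent then its first coordinate $s$ must be $0$, so $(s,t)$ was already idempotent. Since E-unitarity is clearly preserved under passing to sub-inverse-semirings (injective homomorphisms both preserve and reflect idempotency of elements), any $R$ embedding into such a product is E-unitary.

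For the forward direction, assuming $R$ is E-unitary I would define
\[\phi\colon R \to R/E(R) \times E(R), \qquad \phi(x) = ([x], 0_x).\]
Verifying that $\phi$ is a semiring homomorphism is routine: preservation of $0$ and $1$ is immediate; preservation of addition uses \cref{prp:sum_of_idempotents} ($0_{x+y} = 0_x + 0_y$); and preservation of multiplication uses \cref{cor:idempotent_of_product} ($0_{xy} = 0_x 0_y$), together with the fact that $0_1$ is the multiplicative unit of $E(R)$.

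The heart of the argument is injectivity, and this is where the E-unitary hypothesis does its work. Suppose $\phi(x) = \phi(y)$, so $0_x = 0_y =: e$ and $[x] = [y]$ in $R/E(R)$. Since $R$ is E-unitary, \cref{DCMod<=>SubtrMod} gives that $E(R)$ is already subtractive (hence equal to $\da E(R)$), so by \cref{lem:ideal_congruence} there exist idempotents $s, s' \in E(R)$ with $x + s = y + s'$. Adding $-x$ on the left yields $e + s = -x + y + s'$, so $-x + y + s'$ is idempotent; by E-unitarity this forces $-x + y$ itself to be idempotent. Since $-x, y \in \G(e)$ (noting $0_{-x} = 0_x = e$), we have $0_{-x+y} = e$, and therefore the idempotent $-x+y$ must equal $e$. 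Adding $x$ on the left and simplifying using $x + 0_x = x$ and $0_y + y = y$ gives $y = x$, as required.

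The main obstacle is the injectivity step, which really depends on three ingredients working in concert: the explicit description of the congruence generating the ring reflection (\cref{lem:ideal_congruence}), the fact that E-unitarity collapses $\da E(R)$ to $E(R)$ so this description applies with idempotent witnesses, and the E-unitary property itself, which is used to upgrade idempotency of $-x+y+s'$ to idempotency of $-x+y$. Everything else is bookkeeping with the identities from \cref{sec:fundamental}.
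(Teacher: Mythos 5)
Your proof is correct and follows essentially the same route as the paper: the same embedding $\phi(x) = ([x],0_x)$ into $R/E(R) \times E(R)$, with injectivity obtained from \cref{lem:ideal_congruence} and E-unitarity applied to $-x+y$ (the paper uses $x-y$, an immaterial difference), and the same observation that idempotents in the product have first coordinate $0$. The detour through \cref{DCMod<=>SubtrMod} is unnecessary, since \cref{lem:ideal_congruence} applies to arbitrary submodules, but this does not affect correctness.
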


 \begin{proof}
We start with the easier direction. Assume $R$ is a sub-semiring of $S \times T$, where $S$ is a ring and $T$ is an idempotent semiring. It is clear that all the idempotent elements are of the form $(0,z)$. Now if $(x,e) + (0,z) = (0,z')$ in $R \subseteq S \times T$, it must be that $x = 0$ and so $(x,e)$ is an idempotent, as required.

Next assume that $R$ is $E$-unitary. Consider the function $\phi\colon R \to R/E(R) \times E(R)$ mapping $x \in R$ to $([x],0_x)$. It is evident that this is a homomorphism and so we need only verify that $\phi$ is an injection.
     
Suppose $\phi(x) = \phi(y)$. This means that $0_x = 0_y$ and $[x] = [y]$ in $R/E(R)$ from which we can deduce that there are idempotents $z$ and $z'$ such that $x+z = y + z'$ by \cref{lem:ideal_congruence}. Subtracting $y$ from both sides yields $(x-y) + z = 0_y + z'$. Since the right-hand side is an idempotent and $R$ is E-unitary, we find that $x-y$ is an idempotent. 
But since $0_x = 0_y$, the elements $x$ and $y$ lie in the same group $\G(0_x)$ (in which $0_x$ is the only idempotent). Thus, $x = y$, as required.
 \end{proof}

A generalisation of another result concerning inverse semigroups is discussed in a forthcoming paper \cite{faul2024}. There it is shown that just as Clifford semigroups may be viewed as functors from their lattice of idempotents into abelian groups, inverse semirings can always be viewed as lax monoidal functors from their idempotent semiring of idempotents into abelian groups with tensor product.

\bibliographystyle{abbrv}
\bibliography{bibliography}
\end{document}